\newtheoremstyle{plainNoItalics}{}{}{\normalfont}{}{\bfseries}{.}{ }{}
\theoremstyle{plain}
\newtheorem{thm}{Theorem}[section]
\newtheorem{defn}[thm]{Definition}
\newtheorem{rem}[thm]{Remark}
\newtheorem{prop}[thm]{Proposition}
\newcommand{\f}{\frac}
\newcommand{\beq}{\begin{equation}}
\newcommand{\eeq}{\end{equation}}
\newcommand{\beqa}{\begin{eqnarray}}
\newcommand{\eeqa}{\end{eqnarray}}
\newcommand{\bit}{\begin{itemize}}
\newcommand{\eit}{\end{itemize}}
\newcommand{\bedef}{\begin{defn}}
\newcommand{\edefn}{\end{defn}}
\newcommand{\bpro}{\begin{prop}}
\newcommand{\epro}{\end{prop}}
\newcommand{\RR}{\mathbb{R}}
\newcommand{\NN}{\mathbb{N}}
\newcommand\bx{{\mathbf x}}
\newcommand\bv{{\mathbf v}}
\newcommand\bE{{\mathbf E}}
\DeclareMathOperator\dD{d}
\newcommand{\mE}{\mathcal E}
\newcommand{\mH}{\mathcal H}
\newcommand{\sumn}{\sum_{n=0}^{N_H-1}}
\newcommand{\sumj}{\sum_{j=-N_x}^{N_x}}
\newcommand{\bC}{{\mathbf C}}
\newcommand{\xL}{{x_{i-\frac{1}{2}}}}
\newcommand{\xR}{{x_{i+\frac{1}{2}}}}
\newcommand{\iR}{{i+\frac{1}{2}}}
\newcommand{\testR}{{\varphi}}   % test function
\newcommand{\testP}{{\eta}}   % test function
\newcommand{\testE}{{\zeta}}
\newcommand{\pf}{\partial f}
\newcommand{\pt}{\partial t}
\newcommand{\px}{\partial x}
\newcommand{\pv}{\partial v}
\newcommand\ds{ \displaystyle }
\email{francis.filbet@math.univ-toulouse.fr}
\email{marianne.bessemoulin@univ-nantes.fr}
\title[Stable and conservative methods for the Vlasov-Poisson system]{On the stability of conservative discontinuous Galerkin/Hermite Spectral methods for the  Vlasov-Poisson System} 
\author{Marianne Bessemoulin-Chatard and Francis Filbet}
\keywords{Energy conserving; Discontinuous Galerkin method; Hermite spectral method; Vlasov-Poisson}
\subjclass[2010]{Primary: 76P05, % Rarefied gas flows; Boltzmann equation
  82C40, % Time-dependent statistical mechanics; Kinetic theory of gases
  Secondary: 65N08, % Numerical analysis; Finite volume methods
  65N35 % Numerical analysis; Spectral, collocation and related methods 
}
\begin{document}

\maketitle

\centerline{\scshape Marianne Bessemoulin-Chatard}
\medskip
{\footnotesize
    % please put the address of the second  and third author
    \centerline{Laboratoire de Mathématiques Jean Leray, Universit\'e de Nantes,}
    \centerline{2, rue de la Houssinière}
    \centerline{BP 92208, F-44322 Nantes Cedex 3, France}
  }

\medskip

\centerline{\scshape Francis Filbet}
\medskip
{\footnotesize
    % please put the address of the second  and third author
    \centerline{Institut de Math\'ematiques de Toulouse , Universit\'e Paul Sabatier}
    \centerline{Toulouse, France}
}

\bigskip

\begin{abstract}
We study a class of spatial discretizations for the Vlasov-Poisson system written as an hyperbolic system using Hermite polynomials. In particular, we focus on spectral methods and discontinuous Galerkin approximations. To obtain $L^2$ stability properties, we introduce a new $L^2$ weighted space, with a time dependent weight. For the Hermite spectral form of the Vlasov-Poisson system, we prove conservation of mass, momentum and total energy, as well as global stability for the weighted $L^2$ norm. These properties are then discussed for several spatial discretizations. Finally, numerical simulations are performed with the proposed DG/Hermite spectral method to highlight its stability and conservation features.
\end{abstract}

\vspace{0.1cm}

\tableofcontents

\section{Introduction}
\setcounter{equation}{0}
\setcounter{figure}{0}
\setcounter{table}{0}

One of the simplest model  that is currently applied in plasma physics
simulations is the Vlasov-Poisson system. This system
describes the evolution  of charged particles in the case where the only interaction considered is the mean-field force created through electrostatic effects. The system consists in Vlasov equations for phase space density  $f_\beta(t,\bx,\bv)$ of each particle species $\beta$ of charge $q_\beta$ and mass~$m_\beta$
\begin{equation}
  \label{vlasov0}
  \left\{
    \begin{array}{l}
\ds\frac{\partial f_\beta}{\partial t}\,+\,\bv\cdot\nabla_\bx f_\beta \,+\,\frac{q_\beta}{m_\beta}\bE\cdot\nabla_\bv f_\beta \,=\, 0\,,
      \\[1.1em]
      f_\beta(t=0) = f_{\beta,0}\,,
      \end{array}\right.
      \end{equation}
coupled to its self-consistent electric field $\bE= -\nabla_\bx \Phi$ which satisfies the Poisson equation
\begin{equation}
\label{poisson0}
-4\pi\,\epsilon_0\,\Delta_\bx \Phi \,=\, \sum_{\beta} q_\beta
n_\beta\,,\qquad{\rm with }\quad n_\beta = \int_{\RR^d} f_\beta \,\dD\bv\,,
\end{equation}
where $\epsilon_0$ is the vacuum permittivity. On the one hand, for a smooth and nonnegative initial data $f_{\beta,0}$, the solution $f_\beta(t)$
to \eqref{vlasov0} remains smooth and nonnegative for all $t\geq
0$.  On the other hand, for any function  $G\in C^1(\RR^+,\RR^+)$, we have
$$
\frac{\dD}{\dD t}\int_{\RR^{2d}} G(f_\beta(t)) \,\dD \bx \dD\bv
=0,\quad\forall t\in\RR^+\,,
$$
which leads to the conservation of mass,  $L^p$ norms, for $1 \leq p \leq
+\infty$ and  kinetic entropy, 
$$
\mH(t) \,:=\, \int_{\RR^{2d}}
f_\beta(t)\,\ln\left(f(t)\right)\dD\bx \dD\bv \,=\, \mH(0), \quad\forall
t\geq 0\,.
$$
 We also get the conservation of momentum
$$
\sum_{\beta} \int_{ \RR^{2d}} m_\beta \, \bv \,
f_\beta(t)\,\dD\bx\dD\bv =  \sum_{\beta} \int_{\RR^{2d}} m_\beta \, \bv \,
f_{\beta,0}\,\dD\bx\dD\bv 
$$
and total energy
$$
\mE(t)\, :=\, \sum_{\beta}\frac{m_\beta}{2}\int_{\RR^{2d}}
f_\beta(t) \|\bv\|^2 \dD\bx \dD\bv \,+\, 2\pi\epsilon_0 \int_{\RR^d}
\|\bE\|^2 \dD\bx   \,=\, \mE(0)\,, \quad\forall
t\geq 0\,.
$$

%%%%%%%%%%%%%%%%%%%%%%%%%%%%%
%%%%%%%%%%%%%%%%%%%%%%%%%%%%%

Numerical approximation of the Vlasov-Poisson system has been addressed since the sixties. Particle methods (PIC), consisting in approximating the plasma by a finite number of macro particles, have been widely used \cite{Birdsall1985}. They allow to obtain satisfying results with a few number of particles, but a well-known drawback of this class of methods is their inherent numerical noise which only decreases in $1/\sqrt{N}$ when the number of particles $N$ increases, preventing from getting an accurate description of the distribution function for some specific applications. To overcome this difficulty, Eulerian solvers, that is methods discretizing the Vlasov equation on a mesh of the phase space, can be considered. Many authors explored their design, and an overview of many different techniques with their pros and cons can be found in \cite{Filbet2003}. Among them, we can mention finite volume methods \cite{Filbet2001} which are a simple and inexpensive option, but in general low order. Fourier-Fourier transform schemes \cite{Klimas1994} are based on a Fast Fourier Transform of the distribution function in phase space, but suffer from Gibbs phenomena if other than periodic conditions are considered. Standard finite element methods \cite{Zaki1988a,Zaki1988b} have also been applied, but may present numerical oscillations when approximating the Vlasov equation. Later, semi-Lagrangian schemes have also been proposed \cite{Sonnendrucker1999}, consisting in computing the distribution function at each grid point by following the characteristic curves backward. Despite these schemes can achieve high order allowing also for large time steps, they require high order interpolation to compute the origin of the characteristics, destroying the local character of the reconstruction.

In the present article, using Hermite polynomials in the velocity variable, we write the Vlasov-Poisson system \eqref{vlasov0}--\eqref{poisson0} as an hyperbolic system. This idea of using Galerkin methods with a small finite set of orthogonal polynomials rather than discretizing the distribution function in velocity space goes back to the 60's \cite{Armstrong1967,Joyce1971}. More recently, the merit to use rescaled orthogonal basis like the so-called scaled Hermite basis has been shown \cite{engelmann1963, Holloway1996,holloway2,Schumer1998,Tang1993}. In \cite{Holloway1996}, Holloway formalized two possible approaches. The first one, called symmetrically-weighted (SW), is based on standard Hermite functions as the basis in velocity and as test functions in the Galerkin method. It appears that this SW method cannot simultaneously conserve mass and momentum. It makes up for this deficiency by correctly conserving the $L^2$ norm of the distribution function, ensuring the stability of the method. In the second approach, called asymmetrically-weighted (AW), another set of test functions is used, leading to the simultaneous conservation of mass, momentum and total energy. However, the AW Hermite method does not conserve the $L^2$ norm of the distribution function and is then not numerically stable. In addition, the asymmetric Hermite method exactly solves the spatially uniform problem without any truncation error, a property not shared by either the traditional symmetric Hermite expansion or by finite difference methods. The aim of this work is to present a class of numerical schemes based on the AW Hermite methods and to provide a stability analysis.

In what follows, we consider two types of spatial discretizations for the Vlasov-Poisson system \eqref{vlasov0}-\eqref{poisson0}, written as an hyperbolic system using Hermite polynomials in the velocity variable: spectral methods and a discontinuous Galerkin (DG) method. Concerning spectral methods, the Fourier basis is the natural choice for the spatial discretization when considering periodic boundary conditions. Spectral Galerkin and spectral collocation methods for the AW Fourier-Hermite discretization have been proposed in \cite{engelmann1963, LeBourdiec2006, manzini2016}. In \cite{Camporeale2016}, authors study a time implicit AW Fourier-Hermite method allowing exact conservation of charge, momentum and energy, and highlight that for some test cases, this scheme can be significantly more accurate than the PIC method. 

For the SW Fourier-Hermite method, a convergence theory was proposed in \cite{Manzini2017}. In \cite{Kormann2021}, authors study conservation and $L^2$ stability properties of a generalized Hermite-Fourier semi-discretization, including as special cases the SW and AW approaches. Concerning discontinuous Galerkin methods, they are similar to finite elements methods but use discontinuous polynomials and are particularly well-adapted to handling complicated boundaries which may arise in many realistic applications. Due to their local construction, this type of methods provides good local conservation properties without sacrificing the order of accuracy. They were already used for the Vlasov-Poisson system in \cite{Heath2012,Cheng2013}. Optimal error estimates and study of the conservation properties of a family of semi-discrete DG schemes for the Vlasov-Poisson system with periodic boundary conditions have been proved for the one \cite{Ayuso2011} and multi-dimensional \cite{Ayuso2012} cases. In all these works, the DG method is employed using a phase space mesh.

Here, we adopt this approach only in physical space, as in \cite{Filbet2020}, with a Hermite approximation in the velocity variable. In \cite{Filbet2020}, such schemes with discontinuous Galerkin spatial discretization are designed in such a way that conservation of mass, momentum and total energy is rigorously provable.

As mentionned before, the main difficulty is to study the stability of approximations based on asymmetrically-weighted Hermite basis. Indeed, this choice fails to preserve $L^2$ norm of the approximate solution, and therefore to ensure long-time stability of the method. In this framework, the natural space to be considered is
\[L^2_\omega:=\left\{u : \RR^d\times \RR^d\to\RR : \int_{\RR^{2d}} |u(\bx,\bv)|^2\,(2\pi)^{d/2}e^{\|\bv\|^2/2}\dD\bx \dD\bv <+\infty\right\},\]
and there is no estimate of the associated norm for the solution to the Vlasov-Poisson system~\eqref{vlasov0}-\eqref{poisson0}. To overcome this difficulty, we introduce a new $L^2$ weighted space, with a time-dependent weight, allowing to prove global stability of the solution in this space. Actually, this idea has been already employed in \cite{Ma2005,Ma2007} to stabilize Hermite spectral methods for linear diffusion equations and nonlinear convection-diffusion equations in unbounded domains, yielding stability and spectral convergence of the considered methods. Here, we define the weight as 
\begin{equation}\label{eq:def_omega}
\omega(t,\bv)\;:=\,(2\pi)^{d/2}\,e^{(\alpha(t)\,\|\bv\|)^2/2},
\end{equation}
where $\alpha$ is a nonincreasing positive function which will be designed in such a way that a global stability estimate can be established in the following $L^2$ weighted space:
\begin{equation*}
  %\label{def:spaceL2w}
L^2_{\omega(t)}:=\left\{u : \RR^{2d}\to\RR : \int_{\RR^{2d}} |u(\bx,\bv)|^2\omega(t,\bv) \dD\bx \dD\bv <+\infty\right\},
\end{equation*}
with $\|\cdot\|_{\omega(t)}$ the corresponding norm, that is
\begin{equation*}
%  \label{def_normL2cont}
\|u\|_{\omega(t)}^2=(2\pi)^{d/2}\int_{\RR^{2d}}|u(\bx,\bv)|^2\,e^{(\alpha(t)\|\bv\|)^2/2}\,\dD\bx\,\dD\bv.
\end{equation*}
Let us now determine the function $\alpha$. To do so, we compute the time derivative of $ \|f_\beta(t)\|_{\omega(t)}$, $f_\beta$ being the solution of \eqref{vlasov0}. Using the Vlasov equation \eqref{vlasov0} and the definition \eqref{eq:def_omega} of the weight $\omega(t,\bv)$, one has
\begin{align*}
\frac{1}{2}\frac{\dD}{\dD t}\|f_\beta(t)\|_{\omega(t)}^2= & -\int_{\RR^{2d}}f_\beta\left(\bv\cdot \nabla_\bx f_\beta+\frac{q_\beta}{m_\beta}\bE\cdot\nabla_\bv f_\beta\right)\omega \dD\bx \dD\bv 
\\
& +\frac{1}{2}\int_{\RR^{2d}}\alpha\,\alpha'\, \|\bv\|^2\,f_\beta^2\,\,\omega \dD\bv \dD\bx.
\end{align*}
Then, since 
\[ \int_{\RR^d}f_\beta\,\bE\cdot\nabla_\bv f_\beta\,\omega \,d\bv = -\frac{1}{2}\int_{\RR^d} \alpha^2f_\beta^2\,\bE\cdot \bv\,\omega\,\dD\bv,\]
we obtain
\begin{equation*}
\frac{1}{2}\frac{\dD }{\dD t}\|f_\beta(t)\|_{\omega(t)}^2\;=\,\frac{1}{2}\int_{\RR^{2d}}f_\beta^2\left(\frac{q_\beta}{m_\beta}\alpha^2\bE\cdot\bv+\alpha\alpha'\|\bv\|^2\right)\omega \dD\bx \dD\bv.
\end{equation*}
Applying now Young inequality on the first term, we get for $\gamma >0$,
\begin{equation*}
\frac{1}{2}\frac{\dD }{\dD t}\|f_\beta(t)\|_{\omega(t)}^2\;\leq\,\frac{1}{2}\int_{\RR^{2d}}f_\beta^2\left(\frac{\gamma}{2}\frac{q_\beta^2}{m_\beta^2}\alpha^4\,\|\bE\|_\infty^2\,\|\bv\|^2+\frac{1}{2\,\gamma}+\alpha\alpha'\|\bv\|^2\right)\omega \dD\bx \dD\bv.
\end{equation*}
We notice that if $\alpha$ is such that 
\begin{equation}\label{design_alpha}
\alpha'=-\frac{\gamma}{2}\frac{q_\beta^2}{m_\beta^2}\|\bE(t)\|_{\infty}^2\,\alpha^3=:I(\alpha,E),
\end{equation}
the first and third terms cancel each other, yielding
\begin{equation*}
\frac{1}{2}\frac{\dD}{\dD t}\|f_\beta(t)\|_{\omega(t)}^2 \leq  \frac{1}{4\,\gamma}\|f_\beta(t)\|_{\omega(t)}^2.
\end{equation*}
Finally, applying Gr\"onwall's Lemma gives
\[\|f_\beta(t)\|_{\omega(t)}\leq \|f_{\beta,0}\|_\omega\, e^{t/4\gamma}.\]
It now remains to define $\alpha$ satisfying \eqref{design_alpha}. We remark that 
\begin{equation}
  \label{eq:def_alpha}
\alpha(t)\,:=\,\alpha_0\,\left(1+\gamma\,\frac{q_\beta^2}{m_\beta^2}\int_0^t\|\bE(s)\|^2_\infty \dD s\right)^{-1/2}
\end{equation}
is a suitable choice, where $\alpha_0$ is the initial value
of $\alpha$ at $t=0$ and corresponds to the initial scale of the
distribution function, whereas $\gamma$ is a free parameter. This function is positive, nonincreasing and the parameter $\gamma>0$ will practically be chosen sufficiently small to ensure that $\alpha(t)$ does not decrease too fast towards $0$ as $t$ goes to infinity.\\

Summarizing, we have established the following result.

\begin{prop}
  \label{prop:L2cont}
Assuming that the initial data $f_{\beta,0}$ belongs to $L^2_{\omega(0)}=L^2_{\omega_0}$, then the solution $f_\beta(t)$ to \eqref{vlasov0} satisfies for all $t\geq 0$:
\begin{equation*}
%  \label{eq:stab_L2cont}
\|f_\beta(t)\|_{\omega(t)}\leq \|f_{\beta,0}\|_{\omega_0}\, e^{t/4\gamma},
\end{equation*}
where $\gamma>0$ is the constant appearing in the definition \eqref{eq:def_alpha} of $\alpha$.
\end{prop}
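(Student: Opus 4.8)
The plan is to make rigorous the formal computation already laid out above; the real content is to justify the two integrations by parts and to verify that the explicit formula \eqref{eq:def_alpha} solves the ODE \eqref{design_alpha}. Throughout I treat $f_\beta$ as a given smooth solution of \eqref{vlasov0} with enough decay in $\bv$ (relative to the weight) that all integrals below converge and all boundary terms vanish; this is consistent with the hypothesis $f_{\beta,0}\in L^2_{\omega_0}$ together with propagation of regularity and decay for the Vlasov equation.

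First I would differentiate $\|f_\beta(t)\|_{\omega(t)}^2$ in time. Two contributions appear: one from $\partial_t f_\beta$, rewritten via \eqref{vlasov0}, and one from the explicit time dependence of $\omega(t,\bv)=(2\pi)^{d/2}e^{\alpha^2\|\bv\|^2/2}$, which produces the term $\alpha\alpha'\|\bv\|^2 f_\beta^2\,\omega$. For the transport contribution I would write $f_\beta\,\bv\cdot\nabla_\bx f_\beta=\tfrac12\,\bv\cdot\nabla_\bx(f_\beta^2)$ and integrate by parts in $\bx$; since $\omega$ is independent of $\bx$ and $\divx\bv=0$, this term vanishes identically (the boundary term disappearing by periodicity or decay). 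For the force contribution I would integrate by parts in $\bv$, using $\nabla_\bv\omega=\alpha^2\,\bv\,\omega$; since $\bE$ is independent of $\bv$ (so $\nabla_\bv\cdot\bE=0$), this gives exactly the identity $\int_{\RR^d} f_\beta\,\bE\cdot\nabla_\bv f_\beta\,\omega\,\dD\bv=-\tfrac12\int_{\RR^d}\alpha^2 f_\beta^2\,\bE\cdot\bv\,\omega\,\dD\bv$ quoted above. Collecting these yields the clean expression for $\tfrac12\tfrac{\dD}{\dD t}\|f_\beta\|_{\omega(t)}^2$.

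Next I would apply Young's inequality to the mixed term $\tfrac{q_\beta}{m_\beta}\alpha^2\bE\cdot\bv$, bounding it, for a free parameter $\gamma>0$, by $\tfrac{\gamma}{2}\tfrac{q_\beta^2}{m_\beta^2}\alpha^4\|\bE\|_\infty^2\|\bv\|^2+\tfrac{1}{2\gamma}$. The key algebraic observation is that the coefficient of $\|\bv\|^2$ is then $\tfrac{\gamma}{2}\tfrac{q_\beta^2}{m_\beta^2}\alpha^4\|\bE\|_\infty^2+\alpha\alpha'$, so imposing \eqref{design_alpha}, namely $\alpha'=-\tfrac{\gamma}{2}\tfrac{q_\beta^2}{m_\beta^2}\|\bE(t)\|_\infty^2\,\alpha^3$, makes it vanish. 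I would then check by direct differentiation that \eqref{eq:def_alpha} solves this ODE with $\alpha(0)=\alpha_0$: writing $\alpha(t)=\alpha_0\bigl(1+c\int_0^t\|\bE(s)\|_\infty^2\,\dD s\bigr)^{-1/2}$ with $c=\gamma\,q_\beta^2/m_\beta^2$ and differentiating reproduces precisely $I(\alpha,E)$, and $\alpha$ is positive and nonincreasing as required.

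With this choice only the harmless term survives, leaving the differential inequality $\tfrac{\dD}{\dD t}\|f_\beta(t)\|_{\omega(t)}^2\le\tfrac{1}{2\gamma}\|f_\beta(t)\|_{\omega(t)}^2$, to which Gr\"onwall's lemma applies and gives the stated bound $\|f_\beta(t)\|_{\omega(t)}\le\|f_{\beta,0}\|_{\omega_0}\,e^{t/4\gamma}$. The main obstacle I anticipate is not the algebra, which is essentially complete above, but the analytic justification of the two integrations by parts: one must ensure that $f_\beta$ and its velocity gradient decay fast enough (against the exponentially growing weight $\omega$) for the boundary terms to vanish, and that $\|\bE(t)\|_\infty$ is finite so that $\alpha$ in \eqref{eq:def_alpha} is well defined. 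A further subtlety worth flagging is that the weight depends on the solution itself through $\bE$, so $\alpha$ is really defined a posteriori along a fixed solution rather than prescribed in advance.
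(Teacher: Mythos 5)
Your proposal is correct and follows essentially the same route as the paper, which establishes Proposition \ref{prop:L2cont} by exactly the computation you describe: differentiating the weighted norm, integrating by parts in $\bx$ and $\bv$, applying Young's inequality with parameter $\gamma$, choosing $\alpha$ to satisfy \eqref{design_alpha} so the $\|\bv\|^2$ terms cancel, and concluding by Gr\"onwall. Your added remarks on justifying the boundary terms and on the a posteriori nature of $\alpha$ are sensible refinements of the same argument, not a different approach.
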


Notice that the weight depends on the solution to the Vlasov-Poisson, hence it is mandatory to control the $L^\infty$ norm of $\bE$ in order that this estimate makes sense. In the next section, we introduce the formulation of the Vlasov equation using the Hermite basis in velocity, and prove the analogous of Proposition \ref{prop:L2cont} for the obtained system. Then in Section \ref{sec:3}, we discuss conservation and stability properties for a class of spatial discretizations including Fourier spectral method and discontinuous Galerkin approximations. In Section \ref{sec:4}, we introduce the time discretization that will be used to compute numerical results with the discontinuous Galerkin method. Finally in Section \ref{sec:5} we present numerical results for two stream instability, bump-on-tail problem and ion acoustic wave to highlight conservations and stability of our discretization. 

%%%%%%%%%%%%%%%%%%%%%%%%%%%%%%%%%%%%%%%%%%
%
%%%%%%%%%%%%%%%%%%%%%%%%%%%%%%%%%%%%%%%%%%
\section{Hermite spectral form of the Vlasov equation}\label{sec:hermite}
\setcounter{equation}{0}
\setcounter{figure}{0}
\setcounter{table}{0}

For simplicity, we now set all the physical constants to one and consider the one dimensional Vlasov-Poisson
system for a single species with periodic boundary conditions in space, where the Vlasov equation reads
\beq
\label{vlasov}
\f{\pf}{\pt}\,+\, v\,\f{\pf}{\px} \,+\,E\,\f{\pf}{\pv} \,\,=\,\, 0\,
\eeq
with $t\geq 0$, position $x \in (0,L)$ and velocity
$v\in\RR$. Moreover, the self-consistent electric field $E$  is determined by the Poisson equation
\beq
\label{poisson}
\f{\partial E}{\px}=\rho- \rho_0\,,
\eeq
where the density $\rho$ is given by
$$
\rho(t,x)\,=\,\int_\RR f(t,x,v)\,\dD v\,, \quad t\geq 0, \, x\in (0,L)
$$
and $\rho_0$ is a constant  ensuring the quasi-neutrality condition
of the plasma
\begin{equation*}
\int_{0}^L \left(\rho - \rho_0\right) \,\dD x = 0\,. 
%\label{back:0}
\end{equation*}

\subsection{Hermite basis}

We approximate the solution $f$ of \eqref{vlasov}--\eqref{poisson} by a finite sum which corresponds to a truncation of a
series
\beq
\label{fseries}
f_{N_H}(t,x,v)=\sum_{n=0}^{N_H-1}C_n(t,x)\,\Psi_n(t,v)\,,
\eeq
where $N_H$ is the number of modes. The issue is then to
determine a well-suited class of basis functions $\Psi_n$ and to find
the expansion coefficients $C_n$. Here, our aim is to obtain a control on a $L^2$ norm of $f_N$, in the spirit of that established in Proposition \ref{prop:L2cont}. We then choose the following basis of normalized scaled time dependent asymmetrically weighted Hermite functions:
\beq
\label{hbasisf}
\Psi_n(t,v)\,=\,\alpha(t)\,{H}_n\left(\alpha(t)v\right)\,\f{e^{-(\alpha(t)v)^2/2}}{\sqrt{2\pi}}\,,
\eeq
where $\alpha$ is the positive nonincreasing function given by
\begin{equation}
  \label{defalpha}
\alpha(t)\,\,=\,\,\alpha_0\,\left(1+\gamma\,\int_0^t\|E_{N_H}(s)\|_\infty^2 \dD s\right)^{-1/2},
\end{equation}
with $E_{N_H}$ an approximation of the electric field that will be defined below. Functions $H_n$ are the Hermite polynomials defined by ${H}_{-1}(\xi)=0$, ${H}_0(\xi)=1$ and for $n\geq
1$, ${H}_n(\xi)$ has the following recursive relation 
$$
\sqrt{n}\,{H}_n(\xi) \,=\, \xi \,{H}_{n-1}(\xi)-\sqrt{n-1}\,{H}_{n-2}(\xi)\,, \quad \forall \,n \geq 1\,.
$$
Let us also emphasize that $H_n'(\xi)=\sqrt{n}H_{n-1}(\xi)$ for all $n\geq 1$, and that the Hermite basis \eqref{hbasisf} satisfies the following orthogonality property
\begin{equation}
  \label{proportho}
\int_{\RR}\Psi_{n}(t,v)H_m(\alpha(t)v) \dD v=\delta_{n,m},
\end{equation}
where $\delta_{n,m}$ is the Kronecker delta function.

Now the objective is to obtain an evolution equation for each mode $C_n$ by substituting the expression \eqref{fseries} for $f_{N_H}$ into the Vlasov equation \eqref{vlasov} and using the orthogonality property \eqref{proportho}. Thanks to the definition \eqref{hbasisf} of $\Psi_n$ and the properties of $H_n$, we compute the different terms of \eqref{vlasov}. The time derivative term is given by
\begin{equation*}
%  \label{eq:dtfN}
\partial_t f_{N_H}=\sum_{n=0}^{N_H-1}\left(\partial_t C_n \Psi_n-C_n\frac{\alpha'}{\alpha}\left(n\Psi_n+\sqrt{(n+1)(n+2)}\,\Psi_{n+2}\right)\right),
\end{equation*}
the transport term is
\begin{equation*}
  %\label{eq:dxfN}
v\,\partial_xf_{N_H}=\sum_{n=0}^{N_H-1}\partial_x C_n \frac{1}{\alpha}\left(\sqrt{n+1}\,\Psi_{n+1}+\sqrt{n}\,\Psi_{n-1}\right),
\end{equation*}
and finally the nonlinear term is
\begin{equation*}
 % \label{eq:dvfN}
E_{N_H}\,\partial_v f_{N_H}=-\sum_{n=0}^{N_H-1}E_{N_H} \,C_n\,\alpha\,\sqrt{n+1}\,\Psi_{n+1}.
\end{equation*}
Then taking $H_n(\alpha\,v)$ as test function and using orthogonality property \eqref{proportho}, we obtain an evolution equation for each mode $C_n$, $n=0,\ldots,N_H-1$:
\begin{equation}
  \label{cn}
\partial_t C_n-\frac{\alpha'}{\alpha}\left(nC_n+\sqrt{(n-1)n}\,C_{n-2}\right)+\frac{1}{\alpha}\left(\sqrt{n}\,\partial_x C_{n-1}+\sqrt{n+1}\,\partial_xC_{n+1}\right)-E_{N_H}\,\alpha\,\sqrt{n}C_{n-1}=0,
\end{equation}
with the understanding that $C_{n}=0$ for $n<0$ and $n\geq N_H$.
Meanwhile, we first observe that the density $\rho_{N_H}$ satisfies
$$
\rho_{N_H} \,=\, \int_\RR f_{N_H} \dD v \,=\, C_0\,,
$$
and then the Poisson equation becomes 
\beq
\label{ps}
\f{\partial E_{N_H}}{\px} = C_0 - \rho_0\,.
\eeq
Observe that if we take $N_H=\infty$ in the expression \eqref{fseries},
we get an infinite system \eqref{cn}-\eqref{ps} of equations for $(C_n)_{n\in\NN}$ and $E$, which is
formally equivalent to the Vlasov-Poisson  system
\eqref{vlasov}-\eqref{poisson}.\\

In what follows, we rather consider a generic weak formulation of \eqref{cn}--\eqref{ps}. Indeed, this will allow us to straightforwardly apply the obtained results to spatial discretizations with spectral methods (see Section \ref{sec:Fourier}). 

Let $V$ be a subspace of $H^1(0,L)$. We look for $C_n(t,\cdot) \in V$ such that $\partial_t C_n(t,\cdot)\in V$ and for all $\varphi_n \in V$, 
\begin{align}
\frac{\dD}{\dD t}\int_{0}^LC_{n}\,\varphi_n\,\dD x&-\frac{\alpha'}{\alpha}\int_{0}^L\left(n\,C_{n}+\sqrt{(n-1)n}\,C_{n-2}\right)\varphi_n\,\dD x\nonumber\\
&- \frac{1}{\alpha}\int_0^L \left(\sqrt{n}\, C_{n-1}+\sqrt{n+1}\, C_{n+1}\right)\varphi_n'\,\dD x \label{fvcn} \\
&-\alpha\sqrt{n}\int_{I_j}E_{N_H}\,C_{n-1}\,\varphi_n\,\dD x=0,\quad 0\leq n \leq N_H-1, \nonumber
\end{align}
and for a couple $\Phi_{N_H}(t,\cdot),\,E_{N_H}(t,\cdot)\in V$ such that $\partial_t E_{N_H}(t,\cdot)\in V$, and for all $\eta$ and $\zeta \in V$, we have
\begin{equation}
\label{fvps}
\left\{
\begin{array}{l}
\ds \int_0^L \Phi_{N_H}\,\eta'\,\dD x =\int_0^L E_{N_H}\,\eta\,\dD x,\\ \, \\
\ds-\int_0^L E_{N_H} \,\zeta'\, \dD x = \int_0^L (C_0-\rho_0)\,\zeta\,\dD x.
\end{array}
\right.
\end{equation}

In the rest of this section, we consider $V=H^1(0,L)$, and then \eqref{fvcn}--\eqref{fvps} is the weak formulation of \eqref{cn}--\eqref{ps}.

\subsection{Conservation properties}
%
%There are different kinds of Hermite approximations based
%on the choice of Hermite functions and weights.  In
%\cite{holloway,holloway2},  J. W. Schumer and J. P. Holloway have
%discussed precisely the different choices of orthogonal
%Hermite basis functions depending on the form of their weight
%functions. If the basis is {asymmetrically weighted}, hence mass, momentum and total energy are preserved
%but not the $L^2$ norm. {In the case of symmetrically weighted basis
%functions, particles number, total energy (for $N_H$ odd) or momentum (for $N_H$ even),  as well as
%$L^2$-norm are conserved} and the numerical stability is shown to be
%better. Moreover, it has been shown in \cite{tang} and later in \cite{le2006}  that it is required to introduce a velocity
%scaling factor to make the method more accurate and stable.  Here,
%the basis is {asymmetrically weighted}, hence we prove the following result for the truncated system
%\eqref{cn}-\eqref{ps}.
%
\begin{prop}
  \label{prop:1}
  For any $N_H\in \NN$, consider the distribution function $f_{N_H}$ given
  by the truncated series
  $$
f_{N_H}(t,x,v)=\sum_{n=0}^{N_H-1}C_n(t,x)\Psi_n(t,v)\,,
$$
where $(C_n,E_{N_H})$ is a solution to the Vlasov-Poisson system written
as \eqref{fvcn}-\eqref{fvps}. Then
mass, momentum and total energy are conserved, that is, 
$$
\frac{\dD }{\dD t}\int_0^L \frac{C_k}{\alpha^k} \, \dD x \, =\, 0, \quad k=0, \,1,
$$
and for the total energy,   
$$
\mE_{N_H}(t) \,:=\, \frac{1}{2}\,\int_{0}^L \left(\frac{1}{\alpha^2} {\left(\sqrt{2} \,C_2
 + C_0 \right)} \,+\, |E_{N_H}|^2 \right)\dD x   \,=\, \mE_{N_H}(0)\,.
$$
\end{prop}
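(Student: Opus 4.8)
The plan is to read off all three conservation laws directly from the weak formulation \eqref{fvcn}--\eqref{fvps} by a single device: choosing the constant test function $\varphi_n\equiv 1$, which is admissible in $V=H^1(0,L)$ because of the periodic setting. This choice kills every flux term (since $\varphi_n'=0$) and reduces \eqref{fvcn} to a balance between the time derivative, the scaling term $\frac{\alpha'}{\alpha}(\cdots)$ and the field term; the whole point will then be to track the time-dependent prefactor $1/\alpha^k$ so that its derivative cancels the $\alpha'/\alpha$ coupling.

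For mass I would set $n=0$, $\varphi_0\equiv1$: the scaling term vanishes because its coefficients $n$ and $\sqrt{(n-1)n}$ are zero, and the field term vanishes because $\sqrt n=0$, leaving only $\frac{\dD}{\dD t}\int_0^L C_0\,\dD x=0$, which is the case $k=0$. For momentum I would take $n=1$, $\varphi_1\equiv1$, so that \eqref{fvcn} becomes
$$\frac{\dD}{\dD t}\int_0^L C_1\,\dD x-\frac{\alpha'}{\alpha}\int_0^L C_1\,\dD x-\alpha\int_0^L E_{N_H}\,C_0\,\dD x=0.$$
Since $\alpha$ depends only on $t$, differentiating $\int_0^L C_1/\alpha\,\dD x$ and substituting this identity makes the $\alpha'/\alpha^2$ terms cancel, leaving $\frac{\dD}{\dD t}\int_0^L C_1/\alpha\,\dD x=\int_0^L E_{N_H}C_0\,\dD x$. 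It remains to see this integral is zero: with $C_0=\partial_x E_{N_H}+\rho_0$ from \eqref{ps} one gets $\int_0^L E_{N_H}C_0=\tfrac12[E_{N_H}^2]_0^L+\rho_0\int_0^L E_{N_H}\,\dD x$, where the boundary term vanishes by periodicity and $\int_0^L E_{N_H}\,\dD x=0$ follows from the first line of \eqref{fvps} with $\eta\equiv1$.

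For the energy I would treat the kinetic and electric contributions separately. The kinetic part is handled exactly as above: with $n=2$, $\varphi_2\equiv1$ in \eqref{fvcn} I obtain $\frac{\dD}{\dD t}\int_0^L C_2\,\dD x$, combine it with mass conservation, and track the $1/\alpha^2$ prefactor; the $\alpha'/\alpha^3$ contributions cancel and leave $\tfrac12\frac{\dD}{\dD t}\int_0^L(\sqrt2\,C_2+C_0)/\alpha^2\,\dD x=\tfrac1\alpha\int_0^L E_{N_H}C_1\,\dD x$. For the electric part, $\tfrac12\frac{\dD}{\dD t}\int_0^L E_{N_H}^2\,\dD x=\int_0^L E_{N_H}\,\partial_t E_{N_H}\,\dD x$; the continuity equation $\partial_t C_0+\tfrac1\alpha\partial_x C_1=0$ (case $n=0$ of \eqref{cn}) together with $\partial_x E_{N_H}=C_0-\rho_0$ gives $\partial_x\!\big(\partial_t E_{N_H}+\tfrac1\alpha C_1\big)=0$, hence $\partial_t E_{N_H}=c(t)-\tfrac1\alpha C_1$, and since $\int_0^L E_{N_H}\,\dD x=0$ the spatially constant $c(t)$ drops out, yielding $\int_0^L E_{N_H}\,\partial_t E_{N_H}\,\dD x=-\tfrac1\alpha\int_0^L E_{N_H}C_1\,\dD x$. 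Adding the two contributions, the cross terms cancel and $\frac{\dD}{\dD t}\mathcal E_{N_H}=0$.

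The bookkeeping is routine; the genuinely delicate point, which is exactly what singles out the rescaled moments $\int_0^L C_k/\alpha^k\,\dD x$ as the right invariants, is that the asymmetric weight makes the bare moments $\int_0^L C_k\,\dD x$ \emph{non}-conserved, so one must verify that the $\alpha'/\alpha$ coupling terms in \eqref{fvcn} are compensated precisely by the time derivative of the $1/\alpha^k$ prefactor. For the energy the analogous subtlety is that the cross term $\tfrac1\alpha\int_0^L E_{N_H}C_1\,\dD x$ generated by the kinetic part must be matched by the electric-energy balance, which is the discrete counterpart of the continuous exchange between kinetic and field energy; this is where the identification of $\partial_t E_{N_H}$ and the vanishing of $\int_0^L E_{N_H}\,\dD x$ do the essential work.
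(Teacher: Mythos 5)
Your proposal is correct: all three conservation laws come out, and the mass and momentum parts follow essentially the paper's own computation (testing against constants, respectively $1/\alpha$, and using $\int_0^L E_{N_H}\,\dD x=0$ obtained from the first line of \eqref{fvps} with $\eta\equiv 1$). Where you genuinely diverge is in the energy balance. The paper never identifies $\partial_t E_{N_H}$ explicitly; it works entirely inside the weak formulation, introducing the potential $\Phi_{N_H}$ and chaining test-function choices ($\eta=C_1$ to rewrite $\tfrac1\alpha\int E_{N_H}C_1$ as $\tfrac1\alpha\int\Phi_{N_H}\partial_xC_1$, then $\eta=\partial_t E_{N_H}$, $\zeta=\Phi_{N_H}$ in the time-differentiated Poisson equation, and finally $\varphi_0=\Phi_{N_H}$ in the continuity equation) so that the kinetic and potential contributions are shown to be exact negatives of one another. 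You instead solve for $\partial_t E_{N_H}=c(t)-\tfrac1\alpha C_1$ by combining the strong continuity equation with the time-differentiated strong Poisson equation, and compute $\int E_{N_H}\,\partial_t E_{N_H}$ directly; this is shorter and arguably more transparent, and the constant $c(t)$ is harmless because $\int_0^L E_{N_H}\,\dD x=0$. The trade-off is that your route leans on pointwise identities ($C_0=\partial_x E_{N_H}+\rho_0$, the spatial antiderivative argument), which is legitimate here since $V=H^1(0,L)$, and still works for the Fourier subspace $V_{N_x}$ (which is closed under $\partial_x$), but would not survive in a derivative-incompatible Galerkin space such as the DG space of Section \ref{sec:3}; the paper's purely weak-form bookkeeping is designed precisely so that the same proof transfers verbatim to any subspace $V$. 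If you intend your argument only for $V=H^1$ or $V=V_{N_x}$, it is a perfectly valid and somewhat more elementary alternative.
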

\begin{proof}
We consider  the first three equations of \eqref{fvcn}: for all $\varphi_0,\,\varphi_1,\,\varphi_2 \in V$, 
\begin{equation}
  \label{c0}
  \left\{
    \begin{array}{ll}
\ds\frac{\dD }{\dD t} \int_0^L C_0\,\varphi_0\,\dD x & \ds-\frac{1}{\alpha}\int_0^L C_1\,\varphi_0'\,\dD x \,=\, 0\,, \\[0.9em]
	\ds \frac{\dD }{\dD t}\int_0^L C_1\,\varphi_1\,\dD x & \ds-\frac{\alpha'}{\alpha}\int_0^L C_1\,\varphi_1\,\dD x-\frac{1}{\alpha}\int_0^L\left( C_0+\sqrt{2} C_2\right)\,\varphi_1'\,\dD x\\
	& \ds -\,\alpha\int_0^L E_{N_H}\, C_0\,\varphi_1\,\dD x \,=\, 0\,,  \\[0.9em]
\ds	\frac{\dD }{\dD t}\int_0^L C_2\,\varphi_2\,\dD x & \ds -\frac{\alpha'}{\alpha}\int_0^L\left(2C_2+\sqrt{2}C_0\right)\,\varphi_2\,\dD x-\frac{1}{\alpha}\int_0^L\left(\sqrt{2} C_1+\sqrt{3} C_3\right)\varphi_2'\, \dD x\\
& \ds -\sqrt{2}\,\alpha\int_0^L E_{N_H}\,C_1\,\varphi_2\,\dD x \,=\, 0\,,
    \end{array}
  \right.
  \end{equation}
which will be related to the conservation of mass, momentum and
energy. Thus, let us look at the conservation properties from
\eqref{c0}.

First the total mass is defined as
$$
\int_0^L\int_{\RR}f_{N_H}(t,x,v)\,\dD v\,\dD x \,=\,\int_0^L C_0(t,x)\dD
x\,,
$$
hence the conservation of mass directly comes from \eqref{c0} by
taking $\varphi_0= 1$ as test function in the first equation.

Then the momentum is given by
$$
\int_0^L\int_{\RR}v\,f_{N_H}(t,x,v)\,\dD v\,\dD x =\int_0^L
\frac{C_1(t,x)}{\alpha(t)}\,\dD x\,.
$$

We have 
\[\f{\dD }{\dD t}  \int_0^L \frac{C_1}{\alpha}\,\dD
x=\,\int_0^L \left(\frac{\partial_t C_1}{\alpha}-\frac{\alpha'}{\alpha^2}C_1\right)\,\dD x,\]
which taking $\varphi_1=\frac{1}{\alpha}$ in the second equation of \eqref{c0} yields
\[ \f{\dD }{\dD t}  \int_0^L \frac{C_1}{\alpha}\,\dD
x= \int_0^L E_{N_H}\, C_0\, \dD x.\]
Now, choosing $\zeta=E$ in the second equation of \eqref{fvps} gives
\[ \f{\dD }{\dD t}  \int_0^L \frac{C_1}{\alpha}\,\dD
x = \rho_0\int_0^L E_{N_H}\,\dD x-\int_0^L E_{N_H} \,\partial_x E_{N_H}\, \dD x = \rho_0\int_0^L E_{N_H} \,\dD x =0,\] 
which gives the conservation of momentum.

Finally to prove the conservation of total energy $\mathcal{E}_{N_H}$, we compute the
variation of the kinetic energy $\mathcal{E}^K_{N_H}$ defined as
$$
\mathcal{E}^K_{N_H}(t)=\f12\int_0^L\int_{\RR}v^2 f_{N_H}(t,x,v)\,\dD x\,\dD
v\,=\,\frac{1}{2}\,\frac{1}{\alpha^2}\int_{0}^L  {\left(\sqrt{2} \,C_2
 + C_0 \right)}\,\dD x.
$$
We have
\[\frac{\dD \mathcal{E}_{N_H}^K(t)}{\dD t}=\frac{1}{2\,\alpha^2}\int_0^L (\sqrt{2}\,\partial_t C_2+\partial_t C_0)\,\dD x-\frac{\alpha'}{\alpha^3}\int_0^L(\sqrt{2}\,C_2+C_0)\,\dD x.   \]
Thus, combining the first equation of \eqref{fvcn} with $\varphi_0=\frac{1}{2\alpha^2}$ and the third equation of \eqref{fvcn} with $\varphi_2=\frac{1}{\sqrt{2}\,\alpha^2}$, we get
\begin{equation}
\label{bruges}
\frac{\dD \mathcal{E}_{N_H}^K(t)}{\dD t}=\frac{1}{\alpha}\int_0^L E_{N_H}\, C_1\,\dD x .
\end{equation}
Finally, taking $\eta=C_1$ in the first equation of \eqref{fvps}, we obtain
\[\frac{\dD \mathcal{E}_{N_H}^K(t)}{\dD t}=\frac{1}{\alpha}\int_0^L\Phi_{N_H}\,\partial_x C_1\,\dD x.\]
We now compute the time derivative of the potential energy defined by
\[ \mathcal{E}_{N_H}^P(t):= \frac{1}{2}\int_0^L |E_{N_H}|^2\,\dD x.\]
Using $\eta=\partial_t E_{N_H} \in V$ as test function in the first equation of \eqref{fvps} and an integration by parts, we have
\[\frac{\dD \mathcal{E}_{N_H}^P(t)}{\dD t}=\int_0^L\partial_t E_{N_H}\,E_{N_H}\,\dD x=\int_0^L \Phi_{N_H}\,\partial_x\partial_t E_{N_H}\,\dD x=-\int_0^L\partial_x\Phi_{N_H}\,\partial_t E_{N_H}\,\dD x.\]
Now, choosing $\zeta=\Phi$ in the time derivative of the second equation of \eqref{fvps}, and then $\varphi_0=\Phi_{N_H}$ in the first equation of \eqref{fvcn}, we finally get
\[\frac{\dD \mathcal{E}_{N_H}^P(t)}{\dD t}=\int_0^L \partial_t C_0\,\Phi_{N_H}\,\dD x=\frac{1}{\alpha}\int_0^L C_1\,\partial_x\Phi_{N_H}\,\dD x=-\frac{1}{\alpha}\int_0^L\partial_x C_1\,\Phi_{N_H}\,\dD x=-\frac{\dD \mathcal{E}_{N_H}^K(t)}{\dD t}.\]
This concludes the proof since the total energy $\mathcal{E}_{N_H}(t)$ is the sum of the kinetic and potential energies.

\end{proof}

\subsection{Weighted $L^2$ stability of $f_{N_H}$}

We now establish the discrete counterpart of Proposition \ref{prop:L2cont}, namely the stability in an appropriately weighted $L^2$ norm. More precisely, with $\alpha$ defined by \eqref{defalpha}, we consider the weight $\omega(t,v):=\sqrt{2\pi}\,e^{(\alpha(t)v)^2/2}$, and denote by $\|\cdot\|_{\omega(t)}$ the corresponding weighted $L^2$ norm. Using the definition \eqref{fseries} of $f_{N_H}$, we have 
\begin{equation*}
\|f_{N_H}(t)\|_{\omega(t)}^2=\sum_{n,m=0}^{N_H-1}\int_0^L\int_{\RR}\alpha\,C_n\,C_m\,\Psi_m(t,v)H_n(\alpha(t)v)\dD\, v\dD x,
\end{equation*}
which gives, using orthogonality property \eqref{proportho}, the following expression for the weighted $L^2$ norm of~$f_{N_H}(t)$:
\begin{equation}
  \label{normL2cn}
\|f_{N_H}(t)\|_{\omega(t)}^2\,=\,\sum_{n=0}^{N_H-1}\alpha\,\int_0^LC_n^2\,\dD x.
\end{equation}

\begin{prop}
  \label{prop:stab_L2dv}
Assuming that $\|f_{N_H}(0)\|_{\omega(0)}<+\infty$, the distribution function $f_{N_H}$ given by the truncated series \eqref{fseries} satisfies the following stability estimate:
\begin{equation*}
%  \label{eq:stab_L2dv}
\|f_{N_H}(t)\|_{\omega(t)}\,\leq\, \|f_{N_H}(0)\|_{\omega(0)}\, e^{t/4\gamma}, \qquad\forall t\geq 0,
\end{equation*}
where $\gamma>0$ is the constant appearing in the definition \eqref{defalpha} of $\alpha$.
\end{prop}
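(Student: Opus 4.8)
The plan is to reproduce, at the discrete level, the a priori computation that led to Proposition \ref{prop:L2cont}, exploiting that the weight $\omega(t,v)$ and the scaling function $\alpha$ have been designed precisely so that the electric-field contribution can be absorbed. Starting from the closed expression \eqref{normL2cn}, I would differentiate in time and split
\[
\frac12\frac{\dD}{\dD t}\|f_{N_H}\|_{\omega(t)}^2=\frac12\sum_{n=0}^{N_H-1}\alpha'\int_0^L C_n^2\,\dD x+\sum_{n=0}^{N_H-1}\alpha\int_0^L C_n\,\partial_t C_n\,\dD x,
\]
and then substitute $\partial_t C_n$ from the mode equation \eqref{cn} (legitimate since here $V=H^1(0,L)$, equivalently testing \eqref{fvcn} with $\varphi_n=C_n$). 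This produces three families of terms: the transport terms carrying $\partial_x C_{n\pm1}$, the $\tfrac{\alpha'}{\alpha}$-coupling terms, and the $E_{N_H}$-terms.

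First I would dispose of the transport family by an antisymmetry-plus-periodicity argument. Pairing the piece $-\tfrac1\alpha\sqrt{n+1}\int_0^L C_n\,\partial_x C_{n+1}\,\dD x$ coming from mode $n$ with the piece $-\tfrac1\alpha\sqrt{n+1}\int_0^L C_{n+1}\,\partial_x C_n\,\dD x$ coming from mode $n+1$, their sum is $-\tfrac{\sqrt{n+1}}{\alpha}\int_0^L\partial_x(C_nC_{n+1})\,\dD x=0$; the only unpaired indices involve $C_{-1}=C_{N_H}=0$, so the truncation leaves no boundary remainder.

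The crux is then to recognize the two surviving families as exactly the velocity moments appearing in the continuous identity. Using the three-term recurrence $\xi H_n(\xi)=\sqrt{n+1}H_{n+1}(\xi)+\sqrt n\,H_{n-1}(\xi)$ together with \eqref{hbasisf} and the orthogonality \eqref{proportho}, I would evaluate $\int_\RR v\,\Psi_n\Psi_m\,\omega\,\dD v$ and $\int_\RR v^2\,\Psi_n\Psi_m\,\omega\,\dD v$ as linear combinations of Kronecker deltas, obtaining
\[
\int_0^L\!\!\int_\RR v\,f_{N_H}^2\,\omega\,\dD v\,\dD x=2\sum_{n}\sqrt n\int_0^L C_{n-1}C_n\,\dD x,
\]
\[
\int_0^L\!\!\int_\RR v^2 f_{N_H}^2\,\omega\,\dD v\,\dD x=\frac1\alpha\sum_{n}\int_0^L\big[(2n+1)C_n^2+2\sqrt{n(n-1)}\,C_{n-2}C_n\big]\dD x.
\]
Matching the first against the $E_{N_H}$-family $\alpha^2\sum_n\sqrt n\int_0^L E_{N_H}C_{n-1}C_n\,\dD x$, and the second against the combination of the weight term with the $\tfrac{\alpha'}{\alpha}$-family, yields
\[
\frac12\frac{\dD}{\dD t}\|f_{N_H}\|_{\omega(t)}^2=\frac12\int_0^L\!\!\int_\RR f_{N_H}^2\big(\alpha^2E_{N_H}\,v+\alpha\alpha'\,v^2\big)\omega\,\dD v\,\dD x,
\]
the exact discrete analogue of the identity derived before Proposition \ref{prop:L2cont}. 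I expect the index bookkeeping that verifies the truncation at $N_H$ does not spoil these moment identities to be the main obstacle, since one must check that every dropped coupling sits outside the admissible index range.

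Once this identity is in hand, the argument of the introduction applies verbatim. Young's inequality in the form $\alpha^2E_{N_H}v\le\tfrac\gamma2\alpha^4\|E_{N_H}\|_\infty^2 v^2+\tfrac1{2\gamma}$, together with the fact that $\alpha$ is built through \eqref{defalpha} precisely so that the $\tfrac\gamma2\alpha^4\|E_{N_H}\|_\infty^2 v^2$ and $\alpha\alpha' v^2$ contributions cancel (the discrete counterpart of \eqref{design_alpha}), leaves $\tfrac12\frac{\dD}{\dD t}\|f_{N_H}\|_{\omega(t)}^2\le\tfrac1{4\gamma}\|f_{N_H}\|_{\omega(t)}^2$. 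Grönwall's lemma then gives $\|f_{N_H}(t)\|_{\omega(t)}\le\|f_{N_H}(0)\|_{\omega(0)}\,e^{t/4\gamma}$. Note that the dependence on $E_{N_H}$ has vanished from the final constant, which is exactly the purpose of the time-dependent weight.
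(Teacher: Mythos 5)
Your proposal is correct and arrives at the paper's estimate, but you organize the central algebraic step along a genuinely different (and arguably more transparent) route. The paper stays entirely at the level of the coefficients: after killing the transport term exactly as you do, it rewrites the $\alpha'$-family through the completion-of-squares identity
$\sum_n\bigl[(n+\tfrac12)C_n^2+\sqrt{n(n-1)}\,C_nC_{n-2}\bigr]=\tfrac12\sum_n\bigl(\sqrt{n}\,C_n+\sqrt{n-1}\,C_{n-2}\bigr)^2$
and the $E$-family as $\tfrac12\sum_n C_{n-1}\bigl(\sqrt{n}\,C_n+\sqrt{n-1}\,C_{n-2}\bigr)$, then applies Young's inequality coefficient by coefficient so that the quadratic term it produces is exactly cancelled by the $\alpha'$ perfect squares via $\alpha'=-\tfrac{\gamma}{2}\|E_{N_H}\|_\infty^2\alpha^3$. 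You instead lift the two surviving families back to phase space by identifying them with the first and second velocity moments of $f_{N_H}^2\,\omega$; your moment formulas (including the prefactors $1$ and $1/\alpha$) are correct, and they do reproduce the continuous identity preceding Proposition \ref{prop:L2cont} with no truncation remainder, because the quadratic couplings only reach modes $n\pm1$ and $n\pm2$, which stay inside the admissible range once the conventions $C_{-1}=C_{-2}=0=C_{N_H}$ are imposed --- this is exactly the bookkeeping point you flag, and it goes through. From there the pointwise Young inequality combined with $f_{N_H}^2\,\omega\ge0$ gives the same differential inequality and Gr\"onwall closes the argument. The two routes are equivalent (your moment identities are the paper's coefficient identities in disguise), but yours explains structurally why the discrete estimate and its constant $1/4\gamma$ coincide with the continuous ones, whereas the paper's version is self-contained at the coefficient level and is the one that transfers directly to the DG setting of Proposition \ref{prop:stab_L2dvdx}, where the extra jump dissipation has no phase-space interpretation.
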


\begin{proof}
We compute the time derivative of $\|f_{N_H}(t)\|_{\omega(t)}^2$ defined by \eqref{normL2cn}:
\begin{equation*}
\frac{\dD}{\dD t}\|f_{N_H}(t)\|_{\omega(t)}^2=\sum_{n=0}^{N_H-1}\left(\alpha\int_0^L \partial_t C_n\,C_n\,\dD x+\frac{\alpha'}{2}\int_0^L C_n^2\,\dD x\right).
\end{equation*}
Thanks to equation \eqref{fvcn} with $\varphi_n=\alpha \, C_n$, we then have to estimate
\begin{align}
\frac{1}{2}\frac{\dD}{\dD t}\sum_{n=0}^{N_H-1}\alpha(t)\int_0^L\,C_n(t,x)^2\,\dD x =& \sum_{n=0}^{N_H-1}\int_0^L\alpha'\left[\left(n+\frac{1}{2}\right)C_n^2+\sqrt{n(n-1)}\,C_n\,C_{n-2}\right]\,\dD x \nonumber\\
&+ \sumn \int_0^L \left(\sqrt{n}\,C_{n-1}+\sqrt{n+1}\,C_{n+1}\right)\partial_x C_n\,\dD x \nonumber \\
&+ \sum_{n=0}^{N_H-1}\int_{0}^L \alpha^2\,\sqrt{n}\,E_{N_H}\,C_n\,C_{n-1}\,\dD x. \label{normL2cn1}
\end{align}
First of all, the transport term vanishes. Indeed, reindexing the sum over $n$ and using that $C_{-1}=0=C_{N_H}$,  we have
\[\sumn \int_0^L \left(\sqrt{n}\,C_{n-1}+\sqrt{n+1}\,C_{n+1}\right)\partial_x C_n\,\dD x = \sumn\int_0^L \sqrt{n}\,\partial_x(C_{n-1}\,C_n)\,\dD x = 0. \]
Then on the one hand,  using again that $C_n=0$ for all $n<0$ and $n\geq N_H$, we have
\begin{equation*}
\sum_{n=0}^{N_H-1}\left[\left(n+\frac{1}{2}\right)C_n^2+\sqrt{n(n-1)}\,C_n\,C_{n-2}\right]=\frac{1}{2}\sum_{n=1}^{N_H+1}\left(\sqrt{n}\,C_n+\sqrt{n-1}\,C_{n-2}\right)^2.
\end{equation*}
On the other hand, we notice that
\begin{equation*}
\sum_{n=0}^{N_H-1} \sqrt{n}\,C_n\,C_{n-1}=\frac{1}{2}\sum_{n=1}^{N_H+1}C_{n-1}\left(\sqrt{n}\,C_n+\sqrt{n-2}\,C_{n-2}\right).
\end{equation*}
Gathering these two identities in \eqref{normL2cn1}, we get
\begin{align*}
\frac{1}{2}\frac{\dD}{\dD t}\sum_{n=0}^{N_H-1}\alpha\int_0^L\,C_n^2\dD x\, =& \,\frac{1}{2}\sum_{n=1}^{N_H+1}\int_0^L \alpha'\left(\sqrt{n}\,C_n+\sqrt{n-1}\,C_{n-2}\right)^2 \dD x\\
&\, +\frac{1}{2}\sum_{n=1}^{N_H+1}\int_0^L E_{N_H}\,\alpha^2\,C_{n-1}\left(\sqrt{n}\,C_n+\sqrt{n-1}\,C_{n-2}\right)\dD x.
\end{align*}
Applying Young inequality in the second sum, this provides:
\begin{align*}
\frac{1}{2}\frac{\dD}{\dD t}\sum_{n=0}^{N_H-1}\alpha\int_0^L\,C_n^2\dD x\, \leq & \,\frac{1}{2}\sum_{n=1}^{N_H+1}\int_0^L \alpha'\left(\sqrt{n}\,C_n+\sqrt{n-1}\,C_{n-2}\right)^2 \dD x\\
&\, +\frac{1}{2}\sum_{n=1}^{N_H+1}\int_0^L \left(\frac{\gamma}{2} \,\|E_{N_H}\|_\infty^2\,\alpha^3\,\left(\sqrt{n}\,C_n+\sqrt{n-1}\,C_{n-2}\right)^2+\frac{1}{2\,\gamma}\,\alpha\,C_{n-1}^2\right)\,\dD x.
\end{align*}
Yet, using definition \eqref{defalpha} of $\alpha$, we have that 
\[\alpha'(t)=-\frac{\gamma}{2}\|E_{N_H}(t)\|_{\infty}^2\,\alpha(t)^3,\]
which yields
\begin{align*}
\frac{1}{2}\frac{\dD}{\dD t}\sum_{n=0}^{N_H-1}\alpha\int_0^L\,C_n^2\dD x\, \leq & \, \frac{1}{4\,\gamma}\sum_{n=1}^{N_H+1}\alpha\int_0^L C_{n-1}^2\dD x\\
\leq & \,\frac{1}{4\,\gamma}\sum_{n=0}^{N_H-1}\alpha\int_0^L C_{n}^2\dD x.
\end{align*}
Using Gr\"onwall's lemma, this gives the expected result.
\end{proof}

\subsection{Control of $\alpha$}

Using the definition \eqref{normL2cn} of $\|f_{N_H}(t)\|_{\omega(t)}$, the stability result established in Proposition \ref{prop:stab_L2dv} gives a stability result in $L^2(0,L)$ for the coefficients $C_n$, provided that $\alpha(t)$ is bounded from below by a positive constant. Due to definition \eqref{defalpha} of $\alpha(t)$, this is achieved as soon as $\|E_{N_H}(t)\|_\infty$ is controlled. This result is given in the following proposition.

\begin{prop}\label{prop:controlE}
  Under assumptions of Proposition \ref{prop:stab_L2dv}, the solution $E_{N_H}(t,\cdot)$ of \eqref{fvps} satisfies that for all $T>0$, there exists a constant $C_T>0$ depending on $T$ such that for all $t\in [0,T]$,
  $$
  \|E_{N_H}(t)\|_\infty\leq C_T.
  $$
\end{prop}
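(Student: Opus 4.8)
The plan is to reduce the $L^\infty$ control of $E_{N_H}$ to an $L^2$ control of the zeroth moment $C_0$, and then to close a self-consistent (bootstrap) estimate for $\alpha$ using Proposition~\ref{prop:stab_L2dv}.

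First I would exploit the one-dimensional structure of the Poisson problem \eqref{fvps}. Testing the first equation of \eqref{fvps} with the constant function $\eta\equiv 1\in V$ yields $\int_0^L E_{N_H}\,\dD x=0$, so $E_{N_H}(t,\cdot)$ has zero average; being in $H^1(0,L)$ it is continuous on the interval and therefore vanishes at some point $x^\ast$. The second equation of \eqref{fvps} is the weak form of $\partial_x E_{N_H}=C_0-\rho_0$, so integrating from $x^\ast$ gives $E_{N_H}(t,x)=\int_{x^\ast}^x (C_0-\rho_0)\,\dD y$ and hence the pointwise bound $\|E_{N_H}(t)\|_\infty\le \|C_0(t)-\rho_0\|_{L^1(0,L)}\le \sqrt{L}\,\|C_0(t)\|_{L^2(0,L)}+\rho_0\,L$ by Cauchy--Schwarz.

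Next I would bound $\|C_0(t)\|_{L^2}$ through the weighted stability estimate. Keeping only the $n=0$ term in the expression \eqref{normL2cn} gives $\alpha(t)\,\|C_0(t)\|_{L^2}^2\le \|f_{N_H}(t)\|_{\omega(t)}^2$, and Proposition~\ref{prop:stab_L2dv} then yields $\|C_0(t)\|_{L^2}^2\le \alpha(t)^{-1}\,\|f_{N_H}(0)\|_{\omega(0)}^2\,e^{t/2\gamma}$. Combined with the previous step, this produces an estimate of the form $\|E_{N_H}(t)\|_\infty\le A\,\alpha(t)^{-1/2}\,e^{t/4\gamma}+B$, with $A=\sqrt{L}\,\|f_{N_H}(0)\|_{\omega(0)}$ and $B=\rho_0\,L$ depending only on the data, $L$ and $\rho_0$.

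The main obstacle, and the heart of the proof, is the circular dependence: this bound involves $\alpha(t)^{-1/2}$, whereas by \eqref{defalpha} one has $\alpha(t)^{-1/2}=\alpha_0^{-1/2}(1+\gamma\Lambda(t))^{1/4}$ with $\Lambda(t):=\int_0^t\|E_{N_H}(s)\|_\infty^2\,\dD s$, so $\alpha$ is itself driven by the very quantity to be controlled. To break the loop I would set up a differential inequality for $\Lambda$: squaring the previous estimate gives $\Lambda'(t)=\|E_{N_H}(t)\|_\infty^2\le 2A^2\alpha(t)^{-1}e^{t/2\gamma}+2B^2=c_1\,(1+\gamma\Lambda(t))^{1/2}e^{t/2\gamma}+c_2$, where $c_1=2A^2/\alpha_0$ and $c_2=2B^2$. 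The key point is that the feedback is \emph{sublinear} (power $1/2$): introducing $w(t):=(1+\gamma\Lambda(t))^{1/2}\ge 1$, one computes $w'=\gamma\Lambda'/(2w)$ and, using $w\ge 1$, obtains the inequality $w'(t)\le \tfrac{\gamma c_1}{2}e^{t/2\gamma}+\tfrac{\gamma c_2}{2}$, whose right-hand side no longer contains $w$. This is directly integrable, so $w$ — and hence $\Lambda$, the resulting lower bound on $\alpha$, and finally $\|E_{N_H}(t)\|_\infty$ — remains bounded by a constant $C_T$ on any interval $[0,T]$. Rigorously this is a continuation argument: the solution is extended as long as $\|E_{N_H}\|_\infty$ stays finite, and the inequality for $w$ precludes blow-up in finite time, so the a priori bound holds on all of $[0,T]$.
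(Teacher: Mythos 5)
Your argument is correct and follows essentially the same route as the paper: reduce $\|E_{N_H}\|_\infty$ to $\|C_0\|_{L^2}$ via the one-dimensional Poisson structure, bound $\|C_0(t)\|_{L^2}^2$ by $\alpha(t)^{-1}\|f_{N_H}(t)\|_{\omega(t)}^2$, invoke Proposition \ref{prop:stab_L2dv}, and close the self-consistent loop through the definition \eqref{defalpha} of $\alpha$ with a Gr\"onwall-type argument. The only cosmetic differences are that the paper uses Sobolev and Poincar\'e--Wirtinger inequalities rather than the fundamental theorem of calculus for the first reduction, and linearizes the feedback via $(1+\gamma\Lambda)^{1/2}\le 1+\gamma\Lambda$ so as to apply the standard Gr\"onwall lemma, whereas you keep the sublinear power and integrate the resulting inequality for $w=(1+\gamma\Lambda)^{1/2}$ directly.
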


\begin{proof}
Since we are in one space dimension, by Sobolev inequality, there exists a constant $c>0$ such that for all $t\geq 0$,
\begin{equation*}
\|E_{N_H}(t)\|_{\infty}^2\leq c\,\|E_{N_H}(t)\|_{H^1}^2.
\end{equation*}
Moreover, since $\int_0^LE_{N_H}\,\dD x=0$, Poincar\'e-Wirtinger inequality applies and then there exists $c'>0$ such that
\begin{equation*}
\|E_{N_H}(t)\|_{\infty}^2\leq c'\,\|\partial_x E_{N_H}(t)\|_{2}^2.
\end{equation*}
Taking $\zeta=\partial_x E_{N_H}$ in the second equation of \eqref{fvps} and applying Cauchy-Schwarz inequality gives
\begin{equation*}
\|\partial_x E_{N_H}(t)\|_{2}^2=\int_0^L\partial_x E_{N_H}\,(C_0-\rho_0)\,\dD x=\int_0^L \partial_x E_{N_H}\,C_0\, \dD x \leq \|\partial_x E_{N_H}(t)\|_2\,\|C_0(t)\|_2,
\end{equation*}
and then one obtains
\begin{equation*}
\|E_{N_H}(t)\|_{\infty}^2\,\leq\, c'\,\|C_0(t)\|_2^2.
\end{equation*}
Hence, we need to control $\|C_0(t)\|_2$ as, 
\begin{align*}
\int_0^L C_0^2\,\dD x&=\int_0^L\left(\int_\RR f_{N_H}(t,x,v)\,\omega(t,v)^{1/2}\,\omega(t,v)^{-1/2}\,\dD v\right)^2\dD x\\
&\leq \int_0^L \left(\int_{\RR}f_{N_H}(t,x,v)^2\,\omega(t,v)\,\dD v\right)\left(\int_{\RR}\omega(t,v)^{-1}\,\dD v\right)\,\dD x\\
&\leq \frac{1}{\alpha(t)}\,\|f_{N_H}(t)\|_{\omega(t)}^2,
\end{align*}
which depends on $\alpha$, that is, on $\|E\|_\infty$. Then, using the definition \eqref{defalpha} of $\alpha$, it yields
\begin{equation*}
\frac{\alpha_0}{\alpha(t)}=\left(1+\gamma\int_0^t\|E_{N_H}(s)\|_\infty^2\,\dD s\right)^{\frac{1}{2}}\leq 1+\gamma\int_0^t\|E_{N_H}(s)\|_\infty^2\,\dD s,
\end{equation*}
which gives
\begin{equation*}
\|E_{N_H}(t)\|_\infty^2\leq c'\,\|f_{N_H}(t)\|_{\omega(t)}^2\left(1+\gamma\int_0^t\|E_{N_H}(s)\|_\infty^2\,\dD s\right).
\end{equation*}
Furthermore, by Proposition \ref{prop:stab_L2dv}, we have for all $t\in [0,T]$,
\begin{equation*}
\|f_{N_H}(t)\|_{\omega(t)}^2\leq c_T:=\|f_{N_H}(0)\|_{\omega(0)}^2\,e^{T/2\gamma}.
\end{equation*}
Thus, 
\begin{equation*}
\|E_{N_H}(t)\|_\infty^2\leq c'\,c_T\left(1+\gamma\int_0^t\|E_{N_H}(s)\|_\infty^2\,\dD s\right).
\end{equation*}
By Gr\"onwall's lemma, we conclude  that for all $t\in [0,T]$,
\begin{equation*}
\|E_{N_H}(t)\|_\infty^2\leq c'\,c_T\,e^{c'c_T \gamma \,t}.
\end{equation*}
\end{proof}

\subsection{Filtering technique} As proposed in \cite{Filbet2020}, we finally apply a filtering procedure to reduce the effects of the Gibbs phenomenon inherent to spectral methods \cite{Hesthaven2007}. The filter consists in multiplying some spectral coefficients $C_n$ in \eqref{fseries} by a scaling factor $\sigma$ to reduce the amplitude of high frequencies, for any $N_H\geq 4$. Concretely, coefficients $C_n$ in \eqref{fseries} are replaced by $\tilde{C}_n$, with
\begin{equation*}
\tilde{C}_n=C_n\,\sigma\left(n/N_H\right).
\end{equation*}
As in \cite{Filbet2020}, we consider the Hou-Li's filter, which reads
 $$
 \sigma(s)\,=\,
 \left\{
   \begin{array}{ll}
     1\,, & \textrm{if\,\,} 0\leq |s|\leq 2/3\,,
     \\[1.1em]
     e^{-\beta\,|s|^\beta}\,,&\textrm{if \,\,} |s|>2/3\,,
     \end{array}
   \right.
 $$
 where the coefficient $\beta$ is chosen as $\beta = 36$. 
 
 Remark that since the filter is applied only when $N_H\geq 4$, the filtering process does not modify the three first coefficients $C_0,\, C_1$ and $C_2$. Then conservations of mass, momentum and total energy established in Proposition \ref{prop:1} are not affected. Moreover, since $\sigma\leq 1$, this process does not impact the stability result established in propositions \ref{prop:stab_L2dv} and \ref{prop:controlE} either.

%%%%%%%%%%%%%%%%%%%%%%%%%%%%%%%%%%%%%%%%%% 
%
%%%%%%%%%%%%%%%%%%%%%%%%%%%%%%%%%%%%%%%%%%
\section{A class of spatial discretizations}
\label{sec:3}
\setcounter{equation}{0}
\setcounter{figure}{0}
\setcounter{table}{0}

This section is devoted to the space discretization of the
 Vlasov-Poisson system written in the form \eqref{cn}-\eqref{ps}. We first consider a spectral method, and then a class of discontinuous Galerkin methods.

%%%%%%%%%%%%%%%%%%%%%%%%%%%%%%%%%%%%%%

\subsection{Fourier method}\label{sec:Fourier}
 
When considering periodic boundary conditions, spectral method with Fourier basis is a natural choice for the spatial discretization. As for example in \cite{Kormann2021}, we consider an odd number $2N_x+1$ of Fourier modes, and look for an approximation $f_{N_H,N_x}$ of the distribution function $f$ solution of Vlasov equation \eqref{vlasov}  defined as
\begin{equation}\label{fFourierHermite}
f_{N_H,N_x}(t,x,v)=\sumn\sumj c_n^j(t)\,e_j(x)\,\Psi_n(t,v),
\end{equation}
where
\[e_j(x):=\frac{1}{\sqrt{L}}\,e^{2\text{i}\pi j x/L}.\]
This corresponds to approximate the Hermite modes $C_n(t,x)$ defined in Section \ref{sec:hermite} as
\begin{equation*}
 C_{n,N_x}(t,x)=\sumj c_n^j(t)\,e_j(x).
 \end{equation*}
The electric field $E_{N_H}$ is also approximated in the Fourier basis as
\begin{equation}\label{EFourierHermite}
E_{N_H,N_x}(t,x)=\sumj E^j(t)\,e_j(x).
\end{equation}
Equations satisfied by the coefficients $(c_n^j)$ and $E^j$ are obtained by considering the weak formulation \eqref{fvcn}--\eqref{fvps} for $C_{n,N_x}$ and $E_{N_H,N_x}$ in the space
\[V_{N_x}:= \text{span}\left\{e_j,\,j=-N_x,\ldots,N_x\right\},\]
and taking $e_j$ as test functions.
Remark that here the function $\alpha$ arising in the definition \eqref{hbasisf} is given~by
\begin{equation*}
%  \label{defalphaFourier}
\alpha(t)\,\,=\,\,\alpha_0\,\left(1+\gamma\,\int_0^t\|E_{N_H,N_x}(s)\|_\infty^2 \dD s\right)^{-1/2}.
\end{equation*}

For this Fourier-Hermite discretization of the Vlasov-Poisson equation, all the results presented in the previous section are satisfied. They are summarized in the following theorem and can be proved exactly in the same way as propositions \ref{prop:1}, \ref{prop:stab_L2dv} and \ref{prop:controlE} by considering $V=V_{N_x}$ in the weak formulation \eqref{fvcn}--\eqref{fvps}.

\begin{thm}
  For any $N_H,\,N_x \in \NN$, consider the approximate solution $(f_{N_H,N_x},E_{N_H,N_x})$ given by \eqref{fFourierHermite}-\eqref{EFourierHermite}, where the coefficients $(c_n^j,E^j)_j$ satisfy \eqref{fvcn}--\eqref{fvps} in $V_{N_x}$. Then,
  \begin{itemize}
    \item mass, momentum and total energy are conserved, that is
\[ \frac{\dD}{\dD t}\left[\frac{c_k^0}{\alpha^k}\right](t)\,=\,0,\quad k=0,\,1,  \]
and for the total energy,
\[\mathcal{E}_{N_H,N_x}(t):=\frac{1}{2}\frac{1}{\alpha^2(t)}\left(\sqrt{2}\,c_2^0(t)+c_0^0(t)\right)+\frac{1}{2}\sumj |E^j(t)|^2=\mathcal{E}_{N_H,N_x}(0).\]
\item Furthermore, assuming that $\|f_{N_H,N_x}(0)\|_{\omega(0)}<+\infty$, the approximate distribution function $f_{N_H,N_x}$ satisfies the following stability estimate:
  \[ \|f_{N_H,N_x}(t)\|_{\omega(t)}\,\leq \, \|f_{N_H,N_x}(0)\|_{\omega(0)}\,e^{t/4\gamma}.
  \]
\item Finally, there exists a constant $C_T>0$ such that for all $t\in [0,T]$,
  $$
  \|E_{N_H,N_x}(t)\|_{\infty}\leq C_T.
  $$
  \end{itemize}
\end{thm}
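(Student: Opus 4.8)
The plan is to recognize that this theorem is not an independent result but the specialization of the generic framework of Section~\ref{sec:hermite} to the finite-dimensional choice $V=V_{N_x}$. The weak formulation \eqref{fvcn}--\eqref{fvps} was deliberately stated for an arbitrary subspace $V\subset H^1(0,L)$, and Propositions~\ref{prop:1}, \ref{prop:stab_L2dv} and \ref{prop:controlE} were established purely by substituting well-chosen test functions into that formulation. Hence it suffices to verify that every test function invoked in those three proofs actually lies in $V_{N_x}$; once this is checked, the identical computations apply verbatim, now with $C_n$ replaced by $C_{n,N_x}$, $E_{N_H}$ by $E_{N_H,N_x}$, and the spatial integrals expanded in the orthonormal Fourier basis $(e_j)$, so that in particular $\int_0^L C_{k,N_x}\,\dD x=\sqrt{L}\,c_k^0$ converts the integral identities into the stated identities for the coefficients.

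First I would record the three structural properties of $V_{N_x}$ that make every argument go through. The space contains the constants, since $1=\sqrt{L}\,e_0\in V_{N_x}$; it is closed under spatial differentiation, because $e_j'(x)=\tfrac{2\text{i}\pi j}{L}\,e_j(x)\in V_{N_x}$; and, being finite-dimensional with a time-independent basis, it is stable under the time derivative of any of its elements, so $\partial_t E_{N_H,N_x}(t,\cdot)\in V_{N_x}$. With these at hand, the conservation statements follow as in Proposition~\ref{prop:1}: the test functions used there are the constants $\varphi_0=1$, $\varphi_1=1/\alpha$, $\varphi_0=1/(2\alpha^2)$, $\varphi_2=1/(\sqrt{2}\,\alpha^2)$, together with $\zeta=E_{N_H,N_x}$, $\eta=C_{1,N_x}$, $\zeta=\Phi_{N_H,N_x}$, $\eta=\partial_t E_{N_H,N_x}$ and $\varphi_0=\Phi_{N_H,N_x}$, all of which belong to $V_{N_x}$ by the above. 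The kinetic-potential energy cancellation is then reproduced exactly as in \eqref{bruges}.

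For the weighted $L^2$ stability I would repeat the proof of Proposition~\ref{prop:stab_L2dv} with $\varphi_n=\alpha\,C_{n,N_x}\in V_{N_x}$; the norm expression \eqref{normL2cn} is unchanged (the $e_j$ being $L^2(0,L)$-orthonormal), the transport term still telescopes to zero thanks to the periodicity of the Fourier modes on $(0,L)$, and the algebraic identities together with the Young inequality and the defining relation \eqref{defalpha} for $\alpha'$ are untouched, so Grönwall's lemma yields the stated bound. The one genuinely delicate point is the control of $\|E_{N_H,N_x}\|_\infty$ in the spirit of Proposition~\ref{prop:controlE}: that proof chooses $\zeta=\partial_x E_{N_H,N_x}$, which is admissible precisely because of the closure of $V_{N_x}$ under differentiation, and it invokes the Poincaré--Wirtinger inequality, which requires $\int_0^L E_{N_H,N_x}\,\dD x=0$. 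The step I would single out as the one to justify carefully is this zero-mean condition, i.e.\ that the zeroth Fourier mode satisfies $E^0=0$: testing the first equation of \eqref{fvps} against the constant $\eta=e_0$ gives $\int_0^L E_{N_H,N_x}\,\dD x=0$ (the left-hand side vanishing since $e_0'=0$), which is exactly what is needed. Once this is secured, the Sobolev estimate, the Cauchy--Schwarz bound on $\|\partial_x E_{N_H,N_x}\|_2$, and the final Grönwall argument proceed identically, giving the uniform bound $\|E_{N_H,N_x}(t)\|_\infty\le C_T$ on $[0,T]$.
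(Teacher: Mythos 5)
Your proposal is correct and follows exactly the route the paper itself takes: the paper proves this theorem by simply invoking Propositions \ref{prop:1}, \ref{prop:stab_L2dv} and \ref{prop:controlE} with $V=V_{N_x}$ in the weak formulation \eqref{fvcn}--\eqref{fvps}. Your additional verifications (that $V_{N_x}$ contains constants, is closed under $\partial_x$ and $\partial_t$, and that testing against $e_0$ yields the zero-mean condition on $E_{N_H,N_x}$ needed for Poincar\'e--Wirtinger) are exactly the implicit checks the paper leaves to the reader.
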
 
 
%%%%%%%%%%%%%%%%%%%%%%%%%%%%%%%%%%%%%%

\subsection{Discontinuous Galerkin method for the Vlasov equation} 

In the spirit of \cite{Ayuso2011,Filbet2020}, we now consider a discontinuous Galerkin approximation of the Vlasov equation \eqref{cn}. This method guarantees conservation of discrete mass as well as weighted $L^2$ stability. Then, to compute an approximation of the Poisson equation, several approaches arise:
\begin{itemize}
\item we will consider a local discontinuous Galerkin method for the Poisson equation as in \cite{Filbet2020}. For this choice, we prove conservation of total energy. However, we do not have control the $L^\infty$ norm of the electric field in this case.
\item  we will also look for a mixed finite element approximation for the Poisson equation. We establish that the obtained approximate solution satisfies semidiscrete counterpart of Proposition \ref{prop:controlE}. However, we cannot obtain conservation of total energy with this choice of approximation.
\end{itemize}

Let us now define the discontinuous Galerkin (DG) scheme for the Vlasov equation with Hermite spectral basis in velocity \eqref{cn}. We first introduce some notations and start with
$\{\xR\}_{i=0}^{i=N_x}$, a partition of $[0,L]$, with $x_{\frac12}=0$,
$x_{N_x+\frac12}=L$. Each element is denoted as $I_i=[\xL,
\xR]$ with its length $h_i$, and $h=\max_i h_i$.

Given any $k\in\mathbb{N}$, we define a finite dimensional discrete piecewise polynomial space
\begin{equation}\label{defVhk}
V_h^k=\left\{u\in L^2(0,L): u|_{I_i}\in P_k(I_i), \forall i\right\}\,,
%\label{eq:DiscreteSpace:1mesh}
\end{equation}
where the local space $P_k(I)$ consists of polynomials of degree at
most $k$ on the interval $I$. We further denote the jump $[u]_\iR$
and the average $\{u\}_\iR$ of $u$ at $x_\iR$ defined as
$$
[u]_\iR\,=\,{u(x_\iR^+)\,-\,u(x_\iR^-)} \quad{\rm and}\quad
\{u\}_\iR\,=\,\frac12\,\left(u(x_\iR^+)\,+\,u(x_\iR^-)\right)\,, \quad \forall\, i\,,
$$
where $u(x^\pm)=\lim_{\Delta x\rightarrow 0^\pm} u(x+\Delta x)$.  We
also denote

\[  u_\iR=u(x_\iR)\,, \qquad u^\pm_\iR=u(x^\pm_\iR)\,. \]

From these notations, we apply a semi-discrete discontinuous Galerkin
method for \eqref{cn} as follows. We look for an approximation $C_{n,h}(t,\cdot) \in V_h^k$, such that for any $\varphi_n \in V_h^k$, we have
\begin{align}
  \frac{\dD}{\dD t}\int_{I_j}C_{n,h}\,\varphi_n\,\dD x&-\frac{\alpha'}{\alpha}\int_{I_j}\left(n\,C_{n,h}+\sqrt{(n-1)n}\,C_{n-2,h}\right)\varphi_n\,\dD x+a_n^j(g_n,\varphi_n)\nonumber
  \\
&-\alpha\sqrt{n}\int_{I_j}E_{N_H,h}\,C_{n-1,h}\,\varphi_n\,\dD x=0,\quad 0\leq n \leq N_H-1, \label{dgcn}
\end{align}
where $a^j_n$ is defined by 
\beq
\label{anh}
\left\{
  \begin{array}{l}
\ds a^j_{n}(g_{n},\varphi_n) \,=\, -\int_{I_j}g_{n}\,\testR'_n \,
    \dD
    x\,+\,\hat{g}_{n,j+\f12}\,\testR^-_{n,j+\f12}-\hat{g}_{n,j-\f12}\,\testR^+_{n,j-\f12}\,,
    \\[1.1em]
\ds g_{n} \,=\,\frac{1}{\alpha}\left(\sqrt{n+1}\,C_{n+1,h}\,+\,\sqrt{n}\,C_{n-1,h}\right)\,.
\end{array}\right.
\eeq

The numerical flux $\hat{g}_{n}$ in \eqref{anh} is given by
\beq
\label{lfflx}
\hat{g}_{n}\,=\,\f12\left[g^-_{n}+g^+_{n}-\delta_n\,\left(C^+_{n,h}\,-\,C^-_{n,h}\right)\right]\,,
\eeq
with the numerical viscosity coefficient defined as $\delta_0=0$, corresponding to a centered flux, and for $1\leq n\leq N_H-1$, we consider the global Lax-Friedrichs flux with $\delta_n=\delta=\sqrt{N_H}/\alpha$. The choice of the centered flux in the case $n=0$ is made to recover the conservation of the semi-discrete total energy (see Proposition \ref{prop:conservationsDG}). Once again, due to the definition of $\delta_n$, it appears crucial to have a positive lower bound for $\alpha$. This property is discussed in what follows, according to the discretization chosen for the Poisson equation.

The approximate solution of \eqref{vlasov} obtained using Hermite polynomials in velocity variable and discontinuous Galerkin discretization in space is then defined by 
\beq
\label{dgfseries}
f_{N_H,N_x}(t,x,v)=\sum_{n=0}^{N_H-1}C_{n,h}(t,x)\Psi_n(t,v)\,,
\eeq
where $C_{n,h}$ satisfy \eqref{dgcn} and $\Psi_n$ are the basis functions defined by \eqref{hbasisf}. The weighted $L^2$ norm of $f_{N_H,N_x}$ is then given by
\begin{equation}\label{dgL2norm}
\|f_{N_H,N_x}(t)\|_{\omega(t)}^2\,=\,\sum_{n=0}^{N_H-1}\alpha(t)\int_{0}^LC^2_{n,h}(t,x)\,\dD x.
\end{equation}
Throughout this section, the function $\alpha$ is defined by
\begin{equation}\label{defalphaDG1}
\alpha(t)\,\,=\,\,\alpha_0\,\left(1+\gamma\,\int_0^t\|E_{N_H,h}(s)\|_\infty^2 \dD s\right)^{-1/2},
\end{equation}
where $E_{N_H,h}$ is the chosen spatial approximation of the electric field $E_{N_H}$. 

Similarly as in the continuous case, we establish the following stability result.

\begin{prop}
  \label{prop:stab_L2dvdx}
  Consider the semi-discrete approximate solution $f_{N_H,N_x}$ given by the truncated series \eqref{dgcn}-\eqref{dgfseries}. Then, we have
\begin{itemize}
\item Conservation of mass
  \[ \frac{\dD}{\dD t}\int_0^LC_{0,h}\dD x \,=\,0.
  \]

\item Furthermore,  assuming that $\|f_{N_H,N_x}(0)\|_{\omega(0)}<+\infty$, we have
\begin{equation}
\label{eq1:stabL2dvdx}
\frac{1}{2}\,\frac{\dD}{\dD t}\|f_{N_H,N_x}(t)\|_{\omega(t)}^2+\alpha(t)\sum_{n=1}^{N_H-1}\sum_{j}\,\delta_n\,[C_{n,h}]_{j-\frac{1}{2}}^2\leq \frac{1}{2\,\gamma}\,\|f_{N_H,N_x}(t)\|_{\omega(t)}^2, \quad\forall t\geq 0,
\end{equation}
from which we deduce
\begin{equation}\label{eq2:stabL2dvdx}
\|f_{N_H,N_x}(t)\|_{\omega(t)}\leq \|f_{N_H,N_x}(0)\|_{\omega(0)}\,e^{t/4\gamma},\quad \forall t\geq 0.
\end{equation}
\end{itemize}
\end{prop}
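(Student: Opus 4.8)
The plan is to follow the architecture of the proof of Proposition \ref{prop:stab_L2dv}, the only genuinely new ingredient being the treatment of the discontinuous Galerkin transport form $a_n^j$; every other term is handled exactly as in the spectral case.

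\emph{Mass conservation.} First I would take $n=0$ and $\varphi_0=1$ in \eqref{dgcn}. The time-weight term drops (it carries a factor $n$ and $C_{-2,h}=0$), the electric-field term carries $\sqrt n=0$, and since $\varphi_0'=0$ the transport form reduces to the flux difference $\hat g_{0,\jR}-\hat g_{0,\jL}$. Summing over $j$ and using the periodic boundary conditions, this telescopes to zero, which gives $\frac{\dD}{\dD t}\int_0^L C_{0,h}\,\dD x=0$.

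\emph{Energy identity.} For the weighted $L^2$ bound I would start from \eqref{dgL2norm} and write $\f12\frac{\dD}{\dD t}\|f_{N_H,N_x}\|_{\omega(t)}^2=\sum_n\big(\tfrac{\alpha'}2\int_0^L C_{n,h}^2\,\dD x+\alpha\int_0^L C_{n,h}\,\partial_tC_{n,h}\,\dD x\big)$. To eliminate $\partial_t C_{n,h}$ I would test \eqref{dgcn} with the (fixed-time) choice $\varphi_n=C_{n,h}(t,\cdot)\in V_h^k$, multiply by $\alpha$, and sum over $n$ and over the cells $j$. Using linearity of $a_n^j$ in its second argument, this produces three groups of terms: the time-weight terms proportional to $\alpha'$, the assembled transport contribution $\sum_n\sum_j a_n^j(g_n,\alpha C_{n,h})$, and the electric-field term $\sum_n\alpha^2\sqrt n\int_0^L E_{N_H,h}\,C_{n-1,h}\,C_{n,h}\,\dD x$.

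\emph{The transport term (the main obstacle).} The crux is to show that $\sum_n\sum_j a_n^j(g_n,\alpha C_{n,h})$ collapses to the nonnegative numerical-viscosity jump term on the left of \eqref{eq1:stabL2dvdx}. In the volume integrals I would substitute $\alpha g_n=\sqrt{n+1}\,C_{n+1,h}+\sqrt n\,C_{n-1,h}$ and reindex the mode sum; the symmetry of this tridiagonal coupling turns the volume part into $-\sum_n\sqrt n\int_{I_j}(C_{n-1,h}C_{n,h})'\,\dD x$, whose cell contributions, reorganized by interfaces under periodicity, yield $\sum_n\sqrt n\sum_j[C_{n-1,h}C_{n,h}]_\jR$. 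On the flux side I would split $\hat g_n$ from \eqref{lfflx} into its centered part $\{g_n\}$ and its viscous part $-\tfrac12\delta_n[C_{n,h}]$. After the same reindexing, the centered part produces $-\sum_n\sqrt n\sum_j(\{C_{n,h}\}[C_{n-1,h}]+\{C_{n-1,h}\}[C_{n,h}])$, and the elementary identity $\{a\}[b]+\{b\}[a]=[ab]$ shows this is exactly the negative of the volume boundary contribution, so the two cancel. (The cancellation is purely algebraic; periodicity ensures no leftover boundary term, and $\delta_0=0$ keeps the $n=0$ flux centered.) What survives is only the viscous part, which yields precisely the nonnegative jump quantity $\alpha\sum_{n=1}^{N_H-1}\sum_j\delta_n[C_{n,h}]^2$ appearing in \eqref{eq1:stabL2dvdx}.

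\emph{Conclusion.} The remaining $\alpha'$- and $E$-terms are then processed verbatim as in Proposition \ref{prop:stab_L2dv}: I would complete the square via $\sum_n[(n+\tfrac12)C_{n,h}^2+\sqrt{n(n-1)}\,C_{n,h}C_{n-2,h}]=\tfrac12\sum_{n=1}^{N_H+1}(\sqrt n\,C_{n,h}+\sqrt{n-1}\,C_{n-2,h})^2$, reindex the electric-field term, apply Young's inequality with parameter $\gamma$, and invoke the defining relation $\alpha'=-\tfrac\gamma2\|E_{N_H,h}\|_\infty^2\alpha^3$ from \eqref{defalphaDG1} to cancel the terms carrying $(\sqrt n\,C_{n,h}+\sqrt{n-1}\,C_{n-2,h})^2$. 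This leaves the right-hand side of \eqref{eq1:stabL2dvdx}. Finally, discarding the nonnegative jump term on the left and applying Grönwall's lemma yields \eqref{eq2:stabL2dvdx}. The only delicate point relative to the spectral proof is the interface bookkeeping in the transport term; once the identity $\{a\}[b]+\{b\}[a]=[ab]$ is in hand, the conservative part cancels exactly and the favorable sign of the viscous jump contribution is immediate.
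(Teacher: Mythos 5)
Your proposal is correct and follows essentially the same route as the paper's proof: test \eqref{dgcn} with $\varphi_n=C_{n,h}$, split the assembled transport form into volume and flux parts, cancel the conservative contributions via the identity $\{a\}[b]+\{b\}[a]=[ab]$ under periodicity so that only the viscous jump term survives, and then treat the $\alpha'$- and $E$-terms exactly as in Proposition \ref{prop:stab_L2dv} before applying Gr\"onwall. The only cosmetic remark is that the careful bookkeeping of the flux term produces the jump sum with a prefactor $\alpha/2$ rather than $\alpha$ (a factor-of-two discrepancy already present between the paper's own derivation and the statement of \eqref{eq1:stabL2dvdx}), which is immaterial since that nonnegative term is discarded to obtain \eqref{eq2:stabL2dvdx}.
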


\begin{rem}\label{rem:momentum}
  Note that compared to the results obtained in propositions \ref{prop:L2cont} and \ref{prop:stab_L2dv}, an additional dissipation term appears in \eqref{eq1:stabL2dvdx}, arising from the discontinuous Galerkin discretization.
\end{rem}

\begin{proof}
Throughout this proof, without any confusion, we will drop the indices $N_H$ and $h$ to lighten the notations.

Let us underline that the discontinuous Galerkin method naturally preserves mass. Indeed, since the equation \eqref{dgcn} for $n=0$ only contains a convective term, we get that
\[
\frac{\dD}{\dD t}\int_0^L C_{0}\,\dD x=0.
\]
Then, using \eqref{dgL2norm}, we have
\begin{equation*}
\frac{1}{2}\frac{\dD}{\dD t}\|f_{N_H,N_x}(t)\|_{\omega(t)}^2=\sum_{n=0}^{N_H-1}\int_0^L\left(\alpha\,C_n\,\partial_t C_n+\frac{\alpha'}{2}\,C_n^2\right)\dD x.
\end{equation*}
Taking $\varphi_n=C_n$ as test function in \eqref{dgcn}, we get
\begin{align}
\frac{1}{2}\frac{\dD}{\dD t}\|f_{N_H,N_x}(t)\|_{\omega(t)}^2=& \sum_{n=0}^{N_H-1}\alpha'\int_{0}^L\left(\left(n+\frac{1}{2}\right)\,C_n+\sqrt{(n-1)n}\,C_{n-2}\right)C_n\dD x \label{dtfnhnx}\\[0.9em]
-&\sum_{n=0}^{N_H-1}\sum_j \alpha\,a_n^j(g_n,C_n) +\sum_{n=0}^{N_H-1}\alpha^2\sqrt{n}\int_0^L E\,C_{n-1}\,C_n\,\dD x. \nonumber
\end{align}
Let us first consider the transport term. Indeed, other terms will be treated exactly in the same way as in the proof of Proposition \ref{prop:stab_L2dv}. Using the definition \eqref{anh} of $a_n^j$, we have
\begin{equation*}
\sum_{n=0}^{N_H-1}\sum_j \alpha\,a_n^j(g_n,C_n)=T_1+T_2,
\end{equation*}
with
$$
\left\{
\begin{array}{ll}
T_1&\ds = -\sum_{n=0}^{N_H-1}\sum_j\alpha\int_{I_j}g_n\,\partial_x C_n\,\dD x,\\
T_2&\ds = \sumn\sum_j\alpha\left(\hat{g}_{n,j+\frac{1}{2}}C_{n,j+\frac{1}{2}}^--\hat{g}_{n,j-\frac{1}{2}}C_{n,j-\frac{1}{2}}^+\right).
\end{array}\right.
$$
We now consider the first term $T_1$. By definition \eqref{anh} of $g_n$, we get
\begin{equation*}
T_1=-\sumn\sum_j\int_{I_j}\left(\sqrt{n+1}\,C_{n+1}+\sqrt{n}\,C_{n-1}\right)\partial_xC_n\,\dD x.
\end{equation*}
Now, reindexing the sum over $n$ and using that $C_{-1}=0=C_{N_H}$, we obtain
\begin{align*}
T_1&=-\sumn\sum_j\sqrt{n}\,\int_{I_j}\partial_x(C_{n-1}C_n)\,\dD x\\
&=-\sumn\sum_j\sqrt{n}\left((C_{n-1}C_n)_{j+\frac{1}{2}}^--(C_{n-1}C_n)_{j-\frac{1}{2}}^+\right).
\end{align*}
Using the periodic boundary conditions, it finally yields
\begin{equation*}
%  \label{eqT1}
T_1=\sumn\sum_j\sqrt{n}\,\left[C_{n-1}C_n\right]_{j-\frac{1}{2}}.
\end{equation*}
We now deal with the second term $T_2$. By definition \eqref{anh} of $g_n$, we compute
\[g_{n,j-\frac{1}{2}}^-+g_{n,j-\frac{1}{2}}^+=\frac{2}{\alpha}\left(\sqrt{n+1}\,\{C_{n+1}\}+\sqrt{n}\,\{C_{n-1}\}\right)_{j-\frac{1}{2}},\]
which yields
\begin{equation*}
\hat{g}_{n,j-\frac{1}{2}}=\frac{1}{\alpha}\left(\sqrt{n+1}\,\{C_{n+1}\}+\sqrt{n}\,\{C_{n-1}\}\right)_{j-\frac{1}{2}}-\frac{\delta_n}{2}\,[C_n]_{j-\frac{1}{2}}.
\end{equation*}
Once again, taking advantage of periodic boundary conditions, we have
\begin{align*}
T_2&=-\sumn\sum_j \alpha\, \hat{g}_{n,j-\frac{1}{2}}\,[C_{n}]_{j-\frac{1}{2}}\\
&= -\sumn\sum_j  \left(\left(\sqrt{n+1}\,\{C_{n+1}\}+\sqrt{n}\,\{C_{n-1}\}\right)_{j-\frac{1}{2}}\,[C_{n}]_{j-\frac{1}{2}}-\frac{\alpha\,\delta_n}{2}\,[C_n]_{j-\frac{1}{2}}^2\right).
\end{align*}
By rearranging the terms of the sum over $n$, using once again that $C_{-1}=0=C_{N_H}$, we obtain
\begin{equation*}
T_2=-\sumn\sum_j \sqrt{n}\,\left(\{C_n\}[C_{n-1}]+\{C_{n-1}\}[C_n]\right)_{j-\frac{1}{2}}+\frac{\alpha}{2}\sum_{n=1}^{N_H-1}\sum_j \delta_n\,[C_n]_{j-\frac{1}{2}}^2.
\end{equation*}
Since 
\[\{C_n\}[C_{n-1}]+\{C_{n-1}\}[C_n]=[C_{n-1}\,C_n],\]
we finally have that
\begin{equation*}
  %\label{dgtransport}
\sumn\sum_j \alpha\,a_n^j(g_n,C_n)=T_1+T_2= \frac{\alpha}{2}\sum_{n=1}^{N_H-1}\sum_j \delta_n\,[C_n]_{j-\frac{1}{2}}^2.
\end{equation*}
Inserting this equality in \eqref{dtfnhnx}, we get
\begin{align*}
\frac{1}{2}\frac{\dD}{\dD t}\|f_{N_H,N_x}(t)\|_{\omega(t)}^2 +\frac{\alpha}{2}\sum_{n=1}^{N_H-1}\sum_j \delta_n\,[C_n]_{j-\frac{1}{2}}^2=& \sum_{n=0}^{N_H-1}\alpha'\int_{0}^L\left(\left(n+\frac{1}{2}\right)\,C_n+\sqrt{(n-1)n}\,C_{n-2}\right)C_n\dD x \\[0.9em]
 +&\sum_{n=0}^{N_H-1}\alpha^2\sqrt{n}\int_0^L E\,C_{n-1}\,C_n\,\dD x. 
\end{align*}
Finally, the right hand side can be treated exactly in the same way as in the proof of Proposition \ref{prop:stab_L2dv}, leading to \eqref{eq1:stabL2dvdx}. Nonnegativity of $\alpha(t)$ and direct application of the Gr\"onwall's lemma provides~\eqref{eq2:stabL2dvdx}.
\end{proof}

We next deal with the approximation of the electric field $E_{N_H}$. This step is crucial to get energy conservation and to control the scaling parameter $\alpha$.

To this end, we need to consider the potential function $\Phi_{N_H}(t,x)$, such that
\begin{equation}\label{approxPoisson}
\left\{
\begin{array}{l}
\ds E_{N_H}\,=\,-\f{\partial \Phi_{N_H}}{\px}\,, \\[0.9em]
\ds\f{\partial E_{N_H}}{\px} \,=\,C_{0} - \rho_0\,.
\end{array}\right.
\end{equation}
Hence we get the one dimensional Poisson equation 
$$
-\f{\partial^2 \Phi_{N_H}}{\px^2} \,=\,C_{0} \,-\, \rho_0\,.
$$
We now propose two methods to discretize this problem in space and discuss their properties.

\subsubsection{Discontinuous Galerkin approximation of the Poisson equation} \label{sec:DGPoisson}

As in \cite{Filbet2020}, we now consider the DG approximation \eqref{dgcn}--\eqref{lfflx} for the Vlasov equation together with a local discontinuous Galerkin method for the Poisson equation \eqref{ps}. We prove conservation of the discrete total energy. However, in this framework, we are not able to control the $L^\infty$ norm of the electric field as in Proposition \ref{prop:controlE}, and conservation of momentum is not achieved (see Remark \ref{rem:momentum}).  
 
The Vlasov equation being approximated by \eqref{dgcn}, we now define a DG approximation of the Poisson equation \eqref{approxPoisson}. We
look for a couple $\Phi_{N_H,h}(t,\cdot), E_{N_H,h}(t,\cdot) \in V_h^k$, such
that for any $\testP$ and $\testE \in V_h^k$, we have 
\beq
\label{dgP}
\left\{
\begin{array}{l}
\ds+\int_{I_j}\Phi_{N_H,h}\,\testP'\,\dD x \, -\,
  \hat{\Phi}_{N_H,h,j+\f12}\,\testP^-_{j+\f12} \,+\,\hat{\Phi}_{N_H,h,j-\f12}\,\testP^+_{j-\f12}\,=\,\int_{I_j}
  E_{N_H,h}\,\testP\,\dD x\,,
  \\[1.1em]
\ds -\int_{I_j}E_{N_H,h}\,\testE'\,\dD x \,+\,
  \hat{E}_{N_H,h,j+\f12}\,\testE^-_{j+\f12}
  \,-\,\hat{E}_{N_H,h,j-\f12}\,\testE^+_{j-\f12}\,=\,\int_{I_j} \left(
  C_{0,h}\,-\,\rho_0\right)\,\testE\,\dD x\,,
\end{array}\right.
\eeq
where the numerical fluxes $\hat{\Phi}_{N_H,h}$ and $\hat{E}_{N_H,h}$ in
\eqref{dgP}  are taken as 
\beq
\label{fluxps}
\left\{
\begin{array}{l}
  \hat{\Phi}_{N_H,h} \,=\, \{\Phi_{N_H,h}\}\,, \\[0.9em]
  \hat{E}_{N_H,h} \,=\, \{E_{N_H,h}\}\,-\,\beta\,[\Phi_{N_H,h}]\,,
\end{array}\right.
  \eeq
with $\beta$ either being a positive constant or a constant multiplying by $1/h$.

We now focus on the conservation properties of this DG method. As already mentioned, the conservation of mass is guaranteed, whatever the choice of discretization for the Poisson equation. Moreover, conservation of the total energy is achieved thanks to the particular choice of the (centered) numerical flux $\hat{g}_0$ in the equation \eqref{dgcn} for $n=0$. Unfortunately, the momentum is not conserved due to the contribution of the source terms in \eqref{dgcn}. In \cite{Filbet2020}, a slight modification of the flux for the unknown $C_{1,h}$ was proposed to ensure the momentum conservation. However, if we apply this modification here, we lose the $L^2$ stability established in Proposition \ref{prop:stab_L2dvdx}. 

\begin{thm}
  \label{prop:conservationsDG}
  For any $N_H\geq 3$, we consider the solution $(C_{n,h},E_{N_H,h},\Phi_{N_H,h})$ to the system \eqref{dgcn}--\eqref{lfflx} together with \eqref{dgP}--\eqref{fluxps}. Then we have
  \begin{itemize}
  \item Conservation of mass.
  \item Furthermore,  assuming that $\|f_{N_H,N_x}(0)\|_{\omega(0)}<+\infty$,  we get
    $$
\|f_{N_H,N_x}(t)\|_{\omega(t)}\leq \|f_{N_H,N_x}(0)\|_{\omega(0)}\,e^{t/4\gamma},\quad \forall t\geq 0.
    $$
  \item The discrete total energy, defined as
\begin{equation}\label{defEtotdg}
\mathcal{E}_{N_H,h}(t)=\frac{1}{2}\int_0^L\left(\frac{1}{\alpha^2}\,(\sqrt{2}\,C_{2,h}+C_{0,h})+|E_{N_H,h}|^2\right)\dD x+\frac{1}{2}\,\beta\sum_j \,[\Phi_{N_H,h}]_{j-\frac{1}{2}}^2,
\end{equation}
is preserved with respect to time.
\end{itemize}
  \end{thm}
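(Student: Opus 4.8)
The plan is to treat the three assertions in increasing order of difficulty. The conservation of mass and the weighted $L^2$ stability estimate require no new argument: the discontinuous Galerkin discretization \eqref{dgcn}--\eqref{lfflx} of the Vlasov equation and the definition \eqref{defalphaDG1} of $\alpha$ are exactly those of Proposition \ref{prop:stab_L2dvdx}, and neither the proof of mass conservation nor the estimate \eqref{eq2:stabL2dvdx} uses the specific discretization chosen for the Poisson equation. Indeed, the stability proof only relies on the structure of \eqref{dgcn} together with the relation $\alpha'=-\frac{\gamma}{2}\|E_{N_H,h}\|_\infty^2\,\alpha^3$ coming from \eqref{defalphaDG1}, which still holds here. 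Hence both follow verbatim from Proposition \ref{prop:stab_L2dvdx}, and the whole content of the statement lies in the conservation of the discrete total energy \eqref{defEtotdg}.

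For the energy, I would split $\mathcal{E}_{N_H,h}=\mathcal{E}^K_{N_H,h}+\mathcal{E}^P_{N_H,h}$, with kinetic part $\mathcal{E}^K_{N_H,h}=\frac{1}{2\alpha^2}\int_0^L(\sqrt2\,C_{2,h}+C_{0,h})\,\dD x$ and potential part $\mathcal{E}^P_{N_H,h}=\frac{1}{2}\int_0^L|E_{N_H,h}|^2\,\dD x+\frac{\beta}{2}\sum_j[\Phi_{N_H,h}]_{j-\frac12}^2$, and then show that their time derivatives cancel, mimicking the continuous computation of Proposition \ref{prop:1}. The kinetic part is the easy one: differentiating and inserting the \emph{globally constant} test functions $\varphi_0=\frac{1}{2\alpha^2}$ in the $n=0$ equation and $\varphi_2=\frac{1}{\sqrt2\,\alpha^2}$ in the $n=2$ equation of \eqref{dgcn}, the volume terms $\int g_n\,\varphi_n'$ vanish because $\varphi_n'=0$, while the numerical-flux contributions in \eqref{anh} telescope to zero under the periodic boundary conditions. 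The $\alpha'$ terms then cancel exactly as in the continuous case, leaving
\[
\frac{\dD \mathcal{E}^K_{N_H,h}}{\dD t}=\frac{1}{\alpha}\int_0^L E_{N_H,h}\,C_{1,h}\,\dD x,
\]
the precise discrete analogue of \eqref{bruges}.

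The remaining task is to establish $\frac{\dD}{\dD t}\mathcal{E}^P_{N_H,h}=-\frac{1}{\alpha}\int_0^L E_{N_H,h}\,C_{1,h}\,\dD x$, and this is where the work lies. I would reproduce, at the discrete level, the chain of the continuous proof: test the first equation of \eqref{dgP} with $\eta=\partial_t E_{N_H,h}\in V_h^k$ to rewrite $\int E_{N_H,h}\,\partial_t E_{N_H,h}$ in terms of $\Phi_{N_H,h}$ after elementwise integration by parts; differentiate the second equation of \eqref{dgP} in time and test it with $\zeta=\Phi_{N_H,h}$ to bring in $\partial_t C_{0,h}$; and finally use the $n=0$ equation of \eqref{dgcn} with the \emph{non-constant} test function $\varphi_0=\Phi_{N_H,h}$, where $g_0=\frac{1}{\alpha}C_{1,h}$, to re-express $\int\partial_t C_{0,h}\,\Phi_{N_H,h}$ through $\frac{1}{\alpha}\int E_{N_H,h}\,C_{1,h}$. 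Each of these steps produces interface terms governed by the numerical fluxes $\hat\Phi_{N_H,h}=\{\Phi_{N_H,h}\}$ and $\hat E_{N_H,h}=\{E_{N_H,h}\}-\beta[\Phi_{N_H,h}]$, while the time derivative of the stabilization term contributes $\beta\sum_j[\Phi_{N_H,h}]_{j-\frac12}[\partial_t\Phi_{N_H,h}]_{j-\frac12}$.

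The main obstacle is precisely the bookkeeping of these interface terms, and two structural choices make them cancel. First, the centered flux $\delta_0=0$ is essential when testing the $n=0$ equation against $\varphi_0=\Phi_{N_H,h}$: with $\hat g_0=\frac{1}{\alpha}\{C_{1,h}\}$ the resulting jump contributions reduce to terms of the form $\{C_{1,h}\}[\Phi_{N_H,h}]$, whereas a Lax--Friedrichs term $-\frac{\delta_0}{2}[C_{0,h}]$ would inject a spurious $[C_{0,h}][\Phi_{N_H,h}]$ contribution with no counterpart elsewhere, breaking the balance. Second, the LDG flux choice \eqref{fluxps} must be used together with the algebraic identity $[ab]=\{a\}[b]+\{b\}[a]$ and periodic telescoping, so that the interface terms generated along the chain recombine and the penalty contribution $-\beta[\partial_t\Phi_{N_H,h}]$ hidden in $\hat E_{N_H,h}$ cancels exactly the time derivative of the stabilization term $\frac{\beta}{2}\sum_j[\Phi_{N_H,h}]_{j-\frac12}^2$ in \eqref{defEtotdg}. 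Once all interface contributions are matched, one is left with $\frac{\dD}{\dD t}\mathcal{E}^P_{N_H,h}=-\frac{1}{\alpha}\int_0^L E_{N_H,h}\,C_{1,h}\,\dD x$, and adding the kinetic identity yields $\frac{\dD}{\dD t}\mathcal{E}_{N_H,h}=0$.
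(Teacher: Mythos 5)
Your proposal is correct and follows essentially the same route as the paper: reduce the first two items to Proposition \ref{prop:stab_L2dvdx}, derive the discrete analogue of \eqref{bruges} for the kinetic energy with the constant test function $\varphi_2=\frac{1}{\sqrt{2}\,\alpha^2}$, and then match the interface terms via $\varphi_0=\Phi_{N_H,h}$ in the $n=0$ equation, $\eta=\pm\partial_t E_{N_H,h}$ and the time-differentiated second equation of \eqref{dgP} tested with $\zeta=\Phi_{N_H,h}$ (the paper additionally makes explicit the intermediate test $\eta=C_{1,h}/\alpha$ to convert $\frac{1}{\alpha}\int E_{N_H,h}C_{1,h}$ into $\Phi_{N_H,h}$-terms, which your sketch subsumes). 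You correctly identify the two decisive structural points — the centered flux $\delta_0=0$ giving $\hat{C}_{1}=\{C_{1,h}\}$, and the penalty in $\hat E_{N_H,h}$ producing $\beta[\partial_t\Phi_{N_H,h}][\Phi_{N_H,h}]=\frac{\beta}{2}\frac{\dD}{\dD t}[\Phi_{N_H,h}]^2$ — which are exactly the cancellations the paper's proof hinges on.
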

\begin{proof}
First and second items follows from Proposition \ref{prop:stab_L2dvdx}.  Then, once again, we omit here the indices $N_H$ and $h$ and first compute the time derivative of the kinetic energy, using mass conservation:
\begin{equation*}
\frac{1}{2}\frac{\dD}{\dD t}\left(\frac{1}{\alpha^2}\int_0^L(\sqrt{2}\,C_2+C_0)\dD x\right) = -\frac{\alpha'}{\alpha^3}\int_0^L(\sqrt{2}\,C_2+C_0)\,\dD x+\frac{1}{\sqrt{2}\,\alpha^2}\int_0^L\partial_t C_2\,\dD x.
\end{equation*}
Using $\varphi_2=\frac{1}{\sqrt{2}\,\alpha^2}$ in \eqref{dgcn} for $n=2$ and summing over $j$, we have
\[\frac{1}{\sqrt{2}\,\alpha^2}\int_0^L\partial_t C_2\,\dD x-\frac{\alpha'}{\alpha^3}\int_0^L(\sqrt{2}\,C_2+C_0)\,\dD x=\frac{1}{\alpha}\int_0^L E\,C_1\,\dD x,\]
which yields that
\begin{equation}\label{dgkinenergy1}
\frac{1}{2}\frac{\dD}{\dD t}\left(\frac{1}{\alpha^2}\int_0^L(\sqrt{2}\,C_2+C_0)\dD x\right) =\frac{1}{\alpha}\int_0^L E\,C_1\,\dD x.
\end{equation}
On the one hand, to compute the right hand side of \eqref{dgkinenergy1}, we choose $\eta=\frac{C_1}{\alpha}$ in the first equation of~\eqref{dgP}:
\begin{equation}\label{dgkinenergy2}
\frac{1}{\alpha}\int_{I_j}E\,C_1\,\dD x=\frac{1}{\alpha}\int_{I_j}\Phi\,\partial_x C_1\,\dD x-\frac{1}{\alpha}\,\hat{\Phi}_{j+\frac{1}{2}}\,C_{1,j+\frac{1}{2}}^-+\frac{1}{\alpha}\,\hat{\Phi}_{j-\frac{1}{2}}\,C_{1,j-\frac{1}{2}}^+.
\end{equation}
On the other hand, we take $\varphi_0=\Phi$ in \eqref{dgcn} for $n=0$ and use definition \eqref{anh} to get
\begin{align}
\int_{I_j}\partial_tC_0\,\Phi\,\dD x &=-  a_0^j(g_0,\Phi)=-\frac{1}{\alpha}\,a_0^j(C_1,\Phi)\nonumber\\
&=\frac{1}{\alpha}\int_{I_j} C_1\,\partial_x\Phi\,\dD x-\frac{1}{\alpha}\,\hat{C}_{1,j+\frac{1}{2}}\Phi_{j+\frac{1}{2}}^-+\frac{1}{\alpha}\,\hat{C}_{1,j-\frac{1}{2}}\,\Phi_{j-\frac{1}{2}}^+.\label{dgkinenergy3}
\end{align}
Adding \eqref{dgkinenergy2} and \eqref{dgkinenergy3} and summing over $j$, we obtain
\begin{align*}
\frac{1}{\alpha}\int_0^L E\,C_{1}\,\dD x+\int_0^L\partial_tC_0\,\Phi\,\dD x&=\frac{1}{\alpha}\sum_j\int_{I_j}\partial_x(C_1\,\Phi)\,\dD x+\frac{1}{\alpha}\sum_j\left(\hat{\Phi}[C_1]+\hat{C}_1[\Phi]\right)_{j-\frac{1}{2}}\\
&=\frac{1}{\alpha}\sum_j\left(-[C_1\,\Phi]+\hat{\Phi}[C_1]+\hat{C}_1[\Phi]\right)_{j-\frac{1}{2}}.
\end{align*}
Since $[C_1\,\Phi]=[C_1]\{\Phi\}+\{C_1\}[\Phi]$ and $\{\Phi\}=\hat{\Phi}$ by \eqref{fluxps}, we obtain
\begin{equation}\label{dgkinenergy3bis}
\frac{1}{\alpha}\int_0^L E\,C_1\,\dD x+\int_0^L \partial_t C_0\,\Phi\,\dD x=\frac{1}{\alpha}\sum_j \left( \hat{C}_1-\{C_1\}\right)[\Phi]_{j-\frac{1}{2}}.
\end{equation}
By definition \eqref{lfflx} of $\hat{g}_0$ with $\delta_0=0$, we deduce that $\hat{C}_1=\{C_1\}$, which cancels the right hand side of~\eqref{dgkinenergy3bis}. We now have to treat the second term of the left hand side.  To this end, let us take the time derivative of the second equation in \eqref{dgP} and choose $\zeta=\Phi$ as test function. It gives 
\begin{equation}\label{dgkinenergy4}
\int_{I_j}\partial_tC_0\,\Phi\,\dD x=-\int_{I_j}\partial_t E\,\partial_x \Phi\,\dD x+\widehat{\partial_t E}_{j+\frac{1}{2}}\,\Phi_{j+\frac{1}{2}}^- -\widehat{\partial_t E}_{j-\frac{1}{2}}\,\Phi_{j-\frac{1}{2}}^+.
\end{equation}
Taking $\eta=-\partial_t E$ in the first equation of \eqref{dgP}, we obtain
\begin{equation*}
-\int_{I_j}E\,\partial_t E\,\dD x=-\int_{I_j}\Phi\,\partial_x\partial_t E\,\dD x+\hat{\Phi}_{j+\frac{1}{2}}(\partial_t E)_{j+\frac{1}{2}}^- -\hat{\Phi}_{j-\frac{1}{2}}(\partial_t E)_{j-\frac{1}{2}}^+.
\end{equation*}
Performing an integration by parts of the first term of the right hand side, it provides
\begin{equation*}
%  \label{dgkinenergy5}
-\frac{1}{2}\frac{\dD}{\dD t}\int_{I_j}E^2\,\dD x=\int_{I_j}\partial_x\Phi\,\partial_t E\,\dD x-(\Phi\,\partial_t E)_{j+\frac{1}{2}}^-+(\Phi\,\partial_t E)_{j-\frac{1}{2}}^+ +\hat{\Phi}_{j+\frac{1}{2}}(\partial_t E)_{j+\frac{1}{2}}^- -\hat{\Phi}_{j-\frac{1}{2}}(\partial_t E)_{j-\frac{1}{2}}^+.
\end{equation*}
Adding this latter equality and \eqref{dgkinenergy4}, and summing over $j$, we get
\begin{equation*}
\int_0^L\partial_t C_0\,\Phi\,\dD x-\frac{1}{2}\frac{\dD}{\dD t}\int_{0}^L E^2\,\dD x=\sum_j\left([\Phi\,\partial_t E]-\widehat{\partial_t E}\,[\Phi]-\hat{\Phi}\,[\partial_t E]\right)_{j-\frac{1}{2}}.
\end{equation*}
Now, by definition \eqref{fluxps} of the numerical fluxes, we have
\begin{align*}
[\Phi\,\partial_t E]-\widehat{\partial_t E}\,[\Phi]-\hat{\Phi}\,[\partial_t E]&=\left(\{\partial_t E\}-\widehat{\partial_t E}\right)[\Phi]+\left(\{\Phi\}-\hat{\Phi}\right)[\partial_t E]\\
&= \beta[\partial_t \Phi][\Phi]\\
&=\frac{\beta}{2}\frac{\dD}{\dD t}[\Phi]^2,
\end{align*}
which gives
\begin{equation*}
\int_0^L \partial_t C_0\,\Phi\,\dD x=\frac{1}{2}\frac{\dD}{\dD t}\left(\int_0^L E^2\,\dD x+\beta\sum_j\,[\Phi]^2_{j-\frac{1}{2}}\right).
\end{equation*}
Using this last identity together with \eqref{dgkinenergy1} and \eqref{dgkinenergy3bis}, we finally obtain:
\begin{equation}\label{conservationenergy}
\frac{\dD}{\dD t}\mathcal{E}(t)=0.
\end{equation}
\end{proof}

\begin{rem}
Concerning momentum, we have
\begin{equation}\label{dgmomentum1}
\frac{\dD}{\dD t}\int_0^L \frac{C_1}{\alpha}\,\dD x =\int_0^L\left(\frac{1}{\alpha}\,\partial_t C_1-\frac{\alpha'}{\alpha^2}\,C_1  \right)\,\dD x.
\end{equation}
Taking $\varphi_1=\frac{1}{\alpha}$ as test function in \eqref{dgcn} for $n=1$ and summing over $j$, we get
\begin{equation}\label{dgmomentum1bis}
\frac{1}{\alpha}\int_{0}^L\partial_t C_1\, \dD x -\frac{\alpha'}{\alpha^2}\int_{0}^L C_1\,\dD x=\int_0^L E\,C_0\,\dD x.
\end{equation}
Now, taking $\zeta = E$ in the second equation of \eqref{dgP}, we obtain
\begin{equation}\label{dgmomentum2}
\int_{I_j}C_0\,E\,\dD x = \rho_0\int_{I_j}E\,\dD x -\int_{I_j}E\,\partial_x E\,\dD x+\hat{E}_{j+\frac{1}{2}}E^-_{j+\frac{1}{2}}-\hat{E}_{j-\frac{1}{2}}E^+_{j-\frac{1}{2}}.
\end{equation}
To compute the first term of the right hand side, we take $\eta=1$ in the first equation of \eqref{dgP}:
\begin{equation*}
\int_{I_j}E\,\dD x= -\hat{\Phi}_{j+\frac{1}{2}}+\hat{\Phi}_{j-\frac{1}{2}}.
\end{equation*}
Using this in \eqref{dgmomentum2} and summing over $j$, it yields
\begin{align*}
\int_{0}^LC_0\, E\,\dD x &= -\frac{1}{2}\sum_j\int_{I_j}\partial_x(E^2)\dD x-\sum_j \hat{E}_{j-\frac{1}{2}}[E]_{j-\frac{1}{2}}\\ 
&= \sum_j\left(\frac{1}{2}\,[E^2]-\hat{E}[E]\right)_{j-\frac{1}{2}}.
\end{align*}
Since $[E^2]=2\{E\}[E]$, we have thanks to the definition \eqref{fluxps} of $\hat{E}$ that
\begin{equation}\label{dgmomentum3}
\int_{0}^LC_0\, E\,\dD x = \sum_j\left(\{E\}-\hat{E}\right)[E]_{j-\frac{1}{2}}=\beta\sum_j \,[\Phi]_{j-\frac{1}{2}}\,[E]_{j-\frac{1}{2}}.
\end{equation}
Putting \eqref{dgmomentum1bis} and \eqref{dgmomentum3} together in \eqref{dgmomentum1}, we finally obtain
\begin{equation}\label{conservationmomentum}
\frac{\dD}{\dD t}\int_{0}^L \frac{C_1}{\alpha}\,\dD x = \beta\sum_j \,[\Phi]_{j-\frac{1}{2}}\,[E]_{j-\frac{1}{2}}.
\end{equation}
\end{rem}

\subsubsection{Mixed finite element approximation of the Poisson equation}\label{sec:RTPoisson}

We now consider a mixed finite element approximation of the Poisson problem \eqref{approxPoisson} with the one dimensional version of Raviart-Thomas elements. More precisely, the mixed finite element spaces in 1D turn out to be $(W_h^{k+1},V_h^k)$ finite element spaces, where 
\[W_h^{k+1}:=\{u\in C^0(0,L):u_{|I_i}\in P_{k+1}(I_i),\,\forall i\},\]
and $V_h^k$ is defined by \eqref{defVhk}. 

For $k\geq 0$, we look for a couple $(E_{N_H,h},\Phi_{N_H,h})\in W_h^{k+1}\times V_h^k$ such that for all $(\eta,\zeta)\in W_h^{k+1}\times V_h^k$,
\begin{equation}
\label{RTps}
\left\{
\begin{array}{l}
\ds \int_0^L \Phi_{N_H,h}\,\eta'\,\dD x =\int_0^L E_{N_H,h}\,\eta\,\dD x,\\ \, \\
\ds\int_0^L \partial_x E_{N_H,h} \,\zeta\, \dD x = \int_0^L (C_{0,h}-\rho_0)\,\zeta\,\dD x.
\end{array}
\right.
\end{equation}

With this choice of discretization for the Poisson equation, we are not able to prove the conservation of total energy. By contrast, it is possible to obtain the control of $\|E_{N_H,h}(t)\|_\infty$ in this case, which gives a positive lower bound for $\alpha(t)$ and then allows to deduce a stability result in $L^2(0,L)$ for the coefficients $C_{n,h}$, from the stability in weighted $L^2$ norm for $f_{N_H,N_x}$ stated in Proposition \ref{prop:stab_L2dvdx}. 

\begin{thm}
  For any $N_H\geq 3$, we consider the solution $(C_{n,h},E_{N_H,h},\Phi_{N_H,h})$ to the system \eqref{dgcn}--\eqref{lfflx} together with  \eqref{RTps}. Then, we have
  \begin{itemize}
  \item Conservation of mass.
  \item Furthermore,  assuming that $\|f_{N_H,N_x}(0)\|_{\omega(0)}<+\infty$,  we get
    $$
\|f_{N_H,N_x}(t)\|_{\omega(t)}\leq \|f_{N_H,N_x}(0)\|_{\omega(0)}\,e^{t/4\gamma},\quad \forall t\geq 0.
$$
\item Finally, there exists a constant $C_T>0$ such that \[\|E_{N_H,h}(t)\|_{\infty}\leq C_T, \quad \forall t\in [0,T].\]
\end{itemize}
\end{thm}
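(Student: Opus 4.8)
The first two items require no new argument. The proof of Proposition~\ref{prop:stab_L2dvdx} established both mass conservation and the weighted $L^2$ estimate using only the DG discretization~\eqref{dgcn}--\eqref{lfflx} of the Vlasov equation together with the defining relation $\alpha'=-\frac{\gamma}{2}\,\|E_{N_H,h}\|_\infty^2\,\alpha^3$ coming from~\eqref{defalphaDG1}. In that computation the field $E_{N_H,h}$ enters~\eqref{dgcn} merely as a given coefficient, and the source terms it produces are absorbed by a Young inequality and then cancelled against $\alpha'$; no property of the Poisson solver is invoked. Hence the first two items follow verbatim from Proposition~\ref{prop:stab_L2dvdx} applied with the present $E_{N_H,h}$, and in particular we retain the bound $\|f_{N_H,N_x}(t)\|_{\omega(t)}^2\leq c_T$ on $[0,T]$.

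For the $L^\infty$ control of the field the plan is to reproduce the chain of inequalities of Proposition~\ref{prop:controlE}, the one genuinely new ingredient being that, in the mixed formulation~\eqref{RTps}, the discrete field $E_{N_H,h}$ belongs to $W_h^{k+1}\subset C^0(0,L)\subset H^1(0,L)$. This conformity is exactly what the purely DG Poisson solver of Section~\ref{sec:DGPoisson} lacks, and it is what legitimizes the Sobolev inequality $\|E_{N_H,h}\|_\infty^2\leq c\,\|E_{N_H,h}\|_{H^1}^2$. To then apply the Poincaré--Wirtinger inequality I would first check that $E_{N_H,h}$ has zero average: taking the constant test function $\eta\equiv 1\in W_h^{k+1}$ in the first equation of~\eqref{RTps} gives $\int_0^L E_{N_H,h}\,\dD x=0$, whence $\|E_{N_H,h}\|_\infty^2\leq c'\,\|\partial_x E_{N_H,h}\|_2^2$.

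The remaining step is to bound $\|\partial_x E_{N_H,h}\|_2$. Since $E_{N_H,h}\in W_h^{k+1}$ is piecewise of degree $k+1$, its derivative lies in $V_h^k$, so $\zeta=\partial_x E_{N_H,h}$ is an admissible test function in the second equation of~\eqref{RTps}; using $\int_0^L\partial_x E_{N_H,h}\,\dD x=0$ by periodicity to drop the $\rho_0$ contribution, together with Cauchy--Schwarz, one obtains $\|\partial_x E_{N_H,h}\|_2\leq \|C_{0,h}\|_2$, exactly as in the continuous case. From here the argument is identical to Proposition~\ref{prop:controlE}: a weighted Cauchy--Schwarz estimate gives $\|C_{0,h}\|_2^2\leq \alpha(t)^{-1}\|f_{N_H,N_x}(t)\|_{\omega(t)}^2$, the stability bound $c_T$ combined with $\alpha_0/\alpha(t)\leq 1+\gamma\int_0^t\|E_{N_H,h}\|_\infty^2\,\dD s$ from~\eqref{defalphaDG1} yields the integral inequality $\|E_{N_H,h}(t)\|_\infty^2\leq c'c_T\big(1+\gamma\int_0^t\|E_{N_H,h}(s)\|_\infty^2\,\dD s\big)$, and Grönwall's lemma closes the estimate on $[0,T]$.

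I expect no serious obstacle, since the mixed finite element structure was designed precisely so that $E_{N_H,h}$ is $H^1$-conforming, which is the single place where the DG Poisson approach broke down. The only points requiring care are the verification that constants belong to $W_h^{k+1}$ (so that the zero-mean property holds under periodic boundary conditions) and that $\partial_x E_{N_H,h}\in V_h^k$ (so that it is an admissible test function); both are immediate from the definitions of the Raviart--Thomas pair $(W_h^{k+1},V_h^k)$.
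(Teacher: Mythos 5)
Your proposal is correct and follows essentially the same route as the paper: the first two items are quoted from Proposition \ref{prop:stab_L2dvdx}, and the field bound is obtained via Sobolev and Poincar\'e--Wirtinger inequalities (legitimate because $E_{N_H,h}\in W_h^{k+1}\subset H^1$), testing the second equation of \eqref{RTps} with $\zeta=\partial_x E_{N_H,h}\in V_h^k$, and then closing with the weighted Cauchy--Schwarz bound on $\|C_{0,h}\|_2$, the definition \eqref{defalphaDG1} of $\alpha$, and Gr\"onwall's lemma exactly as in Proposition \ref{prop:controlE}. The only difference is that you spell out two small verifications the paper leaves implicit (the zero mean of $E_{N_H,h}$ via $\eta\equiv 1$, and the admissibility of $\partial_x E_{N_H,h}$ as a test function), which is a welcome addition rather than a deviation.
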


\begin{proof}
First and second items follows from Proposition \ref{prop:stab_L2dvdx}. Then, we follow the same guidelines as in the proof of Proposition \ref{prop:controlE}. By Sobolev and Poincar\'e-Wirtinger inequalities, we have
\[ \|E(t)\|_{\infty}^2\leq c\, \|E(t)\|^2_{H^1}\leq c' \,\|\partial_x E(t)\|_2^2.\]
Since $E \in W_h^{k+1}$, we have that $\partial_x E \in V_h^k$. Then we take $\zeta=\partial_x E$ as test function in the second equation of \eqref{RTps} to get
\[ \|\partial_x E(t)\|_2^2 = \int_0^L C_0\,\partial_x E\,\dD x \leq \|\partial_x E(t)\|_2\,\|C_0(t)\|_2,  \]
and then
\[\|E(t)\|_{\infty}^2\leq c'\,\|C_0(t)\|_2^2.\]
Finally we proceed exactly in the same way as in the proof of Proposition \ref{prop:controlE}, using the definition \eqref{defalphaDG1} of $\alpha$ and the weighted $L^2$ stability result stated in Proposition \ref{prop:stab_L2dvdx}.
\end{proof}

\begin{rem}
Note that the same result can be established for a conforming approximation of the electric potential, corresponding to a direct integration of the discrete Poisson problem \eqref{approxPoisson}, which is straightforward in 1D.
\end{rem}

%%%%%%%%%%%%%%%%%%%%%%%%%%%%%%%%%%%%%%%%%%
%
%%%%%%%%%%%%%%%%%%%%%%%%%%%%%%%%%%%%%%%%%%

\section{Time discretization}
\label{sec:4}
\setcounter{equation}{0}
\setcounter{figure}{0}
\setcounter{table}{0}

We  apply a second order Runge-Kutta scheme to the discontinuous Galerkin method for the Vlasov equation \eqref{dgcn}--\eqref{lfflx} with local discontinuous Galerkin approximation of the Poisson equation \eqref{dgP}--\eqref{fluxps}, coupled with a discretization of $\alpha$ solution to \eqref{design_alpha}. 

We denote by $\mathbf C=(C_0,\ldots,C_{N_H-1})$ the solution to \eqref{dgcn}--\eqref{lfflx} and by $(\cdot,\cdot)$ the standard $L^2$ inner product on the space interval $(0,L)$
\[(C_n,\varphi):=\int_0^LC_n\,\varphi\,\dD x.\]
 For the computation of the source terms, we introduce $b_n^j$ defined for $n=0,\ldots,N_H-1$ and $j=1,\ldots,N_x$ by
 \begin{equation*}
 %  \label{def_bnj}
b_n^j(\alpha,\mathbf{C},E,\varphi_n)=-\frac{I(\alpha,E)}{\alpha}\int_{I_j}\left(n\,C_n+\sqrt{(n-1)n}\,C_{n-2}\right)\,\varphi_n\,\dD x-\alpha\,\sqrt{n}\int_{I_j}E\,C_{n-1}\,\varphi_n\,\dD x,
\end{equation*}
where we use the expression $I(\alpha,E)$ for $\alpha'$ as defined in \eqref{design_alpha}. We set $a_n=\sum_j\,a_n^j$ and $b_n=\sum_j\,b_n^j$.

Equipped with these compact notations, we can write the semi-discrete discontinuous Galerkin method for the Vlasov equation \eqref{dgcn} as
\begin{equation}\label{dgcn_compact}
 \frac{\dD}{\dD t}(C_n,\varphi_n)+a_n(g_n,\varphi_n)+b_n(\alpha,\mathbf{C},E,\varphi_n)=0,
\end{equation} 
with, according to \eqref{design_alpha}, $\alpha$ satisfying 
\begin{equation*}
\alpha'=I(\alpha,E).
\end{equation*}
Let us underline that $b_0=0$, then equation \eqref{dgcn_compact} for $n=0$ does not depend on the electric field $E$.

We now define the time discretization. Let $\Delta t>0$ be the time step. We compute an approximation $\mathbf{C}^m=(C_0^m,\ldots,C_{N_H-1}^m)$ of the solution $\mathbf{C}$ at time $t^m=m\,\Delta t$ for $m\geq 0$. 
Assuming known $\mathbf{C}^{m}$, we compute $\mathbf{C}^{m+1}$ using the following procedure.

We first apply a classical explicit Euler scheme with a half-time step $\Delta t/2$. 
\begin{itemize}
\item We compute $C_0^{(1)}$ with
\beq
\label{rk-1_0}
\f{2\,(C_0^{(1)}-C_0^{m},\varphi_0)}{\Delta t} \,+\,a_0(g_0^m,\varphi_0)\, =\, 0, \quad \forall \; \varphi_0\,\in\,
V_h^k\,.
\eeq
\item Using $C_0^{(1)}$, we solve the DG approximation of the Poisson equation \eqref{dgP}--\eqref{fluxps} to obtain $E^{(1)}$. We can then define 
$$
E^{m+1/4} = \f12\,\left(E^m+E^{(1)}\right)\,.
$$
\item Finally, we compute $C_n^{(1)}$ for $n=1,\ldots,N_H-1$ thanks to 
\beq
\label{rk-1}
\f{2\,(C_n^{(1)}-C_n^{m},\varphi_n)}{\Delta t} \,+\,a_n(g_n^m,\varphi_n) \,+\,
b_n(\alpha^m,\bC^m,E^{m+1/4},\varphi_n)\, =\, 0, \quad \forall \; \varphi_n\,\in\,
V_h^k\,,
\eeq
with
$$
\frac{2(\alpha^{(1)}- \alpha^m)}{\Delta t} \,=\, I(\alpha^m, E^{m+1/4}). 
$$
\end{itemize}
Then using the same procedure, we compute a second stage with a time step $\Delta t$.
\begin{itemize}
\item We compute $C_0^{m+1}$ with
\begin{equation*}
\f{(C_0^{m+1}-C_0^{m},\varphi_0)}{\Delta t} \,+\,a_0(g_0^{(1)},\varphi_0)\, =\, 0, \quad \forall \; \varphi_0\,\in\,
V_h^k\,.
\end{equation*}
\item Using $C_0^{m+1}$, we solve the DG approximation of the Poisson equation \eqref{dgP}--\eqref{fluxps} to obtain $E^{m+1}$. We can then define 
$$
E^{m+1/2} = \f12\,\left(E^m+E^{m+1}\right)\,.
$$
\item Finally, we compute $C_n^{m+1}$ for $n=1,\ldots,N_H-1$ thanks to 
\beq
\label{rk-3}
\f{(C_n^{m+1}-C_n^{m},\varphi_n)}{\Delta t} \,+\,a_n(g^{(1)}_n,\varphi_n) \,+\,
b_n(\alpha^{(1)},\bC^{(1)},E^{m+1/2},\varphi_n)\, =\, 0, \quad \forall \; \varphi_n\,\in\,
V_h^k\,,
\eeq
with
$$
\frac{\alpha^{m+1}- \alpha^m}{\Delta t} \,=\, I(\alpha^{(1)}, E^{m+1/2}). 
$$
\end{itemize}

This scheme corresponds to the one proposed in \cite{Filbet2020} when the scaling function $\alpha$ is constant. In that case, mass and total energy are preserved. However, in our case since $\alpha$ depends on time, it produces an additional error on the energy conservation. In the next section, we will observe that the variations of $\alpha$ are small, hence the variations of total energy are very limited.  

%%%%%%%%%%%%%%%%%%%%%%%%%%%%%%%%%%
%
%%%%%%%%%%%%%%%%%%%%%%%%%%%%%%%%%%

\section{Numerical examples}
\label{sec:5}
\setcounter{equation}{0}
\setcounter{figure}{0}
\setcounter{table}{0}

In this section, we will verify our proposed DG/Hermite Spectral method for the one-dimensional Vlasov-Poisson (VP) system. We take  $N_H$ modes for Hermite spectral basis, and $N_x$ cells in space. We note that due to the Hermite spectral basis, there is no truncation error for the conservation of mass, momentum and energy from cut-off along the $v$-direction. The scaling parameter $\alpha$ is chosen according to the scaling of the initial distribution and the Hou-Li filter with $2/3$ dealiasing rule \cite{hou2007computing,filtered} will be used, if without specification. In all our numerical experiments, we choose $\gamma=0.01$ in such a way that the function $\alpha$ slowly decreases.

In the following, we take $P_2$ piecewise polynomial in space and 2nd order scheme in time. We denote this scheme as ``DG-H". We compute reference solutions using a positivity-preserving PFC scheme proposed in \cite{filbet:01}, which is denoted as ``PFC". The PFC uses discrete velocity coordinate, and the mesh size is $N_x\times N_v$.

First of all, we performed numerical simulations on the Landau damping and obtained similar results as in \cite[Section 4.1]{Filbet2020}. In this case, the quantity $\|\bE(t)\|_\infty \backsim e^{-\kappa_L\,t}$, with $\kappa_L>0$, hence the function
$\alpha$ given by \eqref{eq:def_alpha} decreases slowly.  In the following, we present more challenging numerical tests where the electric field varies with respect to time.

\subsection{Two stream instability}
\label{sec:4.1}

In this example, we consider the two stream instability problem with the initial distribution function
\beq
\label{2stream}
f(t=0,x,v)=\f{2}{7}(1+5v^2)(1+\kappa((\cos(2kx)+\cos(3kx))/1.2+\cos(kx))\f{1}{\sqrt{2\pi}}e^{-v^2/2}\,,
\eeq
where $\kappa=0.01$ and $k=1/2$. For this case, we have
$$
\left\{
  \begin{array}{l}
\ds C_0(t=0,x)\,=\,\f{12}{7}(1+\kappa((\cos(2kx)+\cos(3kx))/1.2+\cos(kx))\,,\\
\ds C_2(t=0,x)\,=\,\f{10\sqrt{2}}{7}(1+\kappa((\cos(2kx)+\cos(3kx))/1.2+\cos(kx))\,.
  \end{array}
\right.
$$
Other $C_n$'s are all zero and $\alpha_0=1$. The background density is $\rho_0=12/7$. The length of the domain in the $x$-direction is $L=4\pi$.

We compute the solution up to time $T=50$. For DG-H, we take $N_x=64$ and $N_H=128$. We show the time evolution of the relative deviations of discrete mass, momentum and total energy in Figure \ref{fig:1} $(a)$. We can see these errors are still up to machine precision. Although the total energy varies a little due to the time discretization, the errors are at the level of $10^{-9}$ which is rather small. We also plot the time evolution of the electric field in $L^2$ norm in Figure \ref{fig:1} $(b)$.

Furthermore, we also present in Figure \ref{fig:2}, the time evolution of weighted $L^2$ norm of  the distribution and  the scaling function $\alpha$ given by \eqref{eq:def_alpha} since it plays a crucial role in the stability analysis of the DG-H method. On the one hand, on the time interval $t \in (0,25)$, the $L^2$ norm of the distribution increases almost exponentially, then it decreases and oscillates. On the other hand the variations of the scaling function $\alpha$  remain small even if the electric field strongly varies with respect to time due to the instability.

Finally, we compare our numerical results with those obtained from the PFC scheme \cite{filbet:01} on a refined  mesh $256\times 1024$. For this case, we show the surface plots of $f$ at $t=20$,  $30$ and $40$ in Figure \ref{fig:3} for both methods. The results are comparable for short time $t\leq 30$, then for larger time, fine structures of the distribution function localized in the center are removed for the DG-H method due to the coarser grid.

\begin{figure}
  \centering
  \begin{tabular}{cc}
	\includegraphics[width=3.2in,clip]{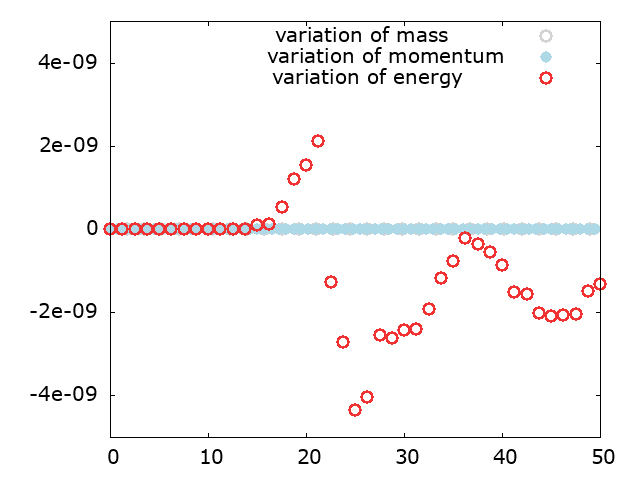}&
	\includegraphics[width=3.2in,clip]{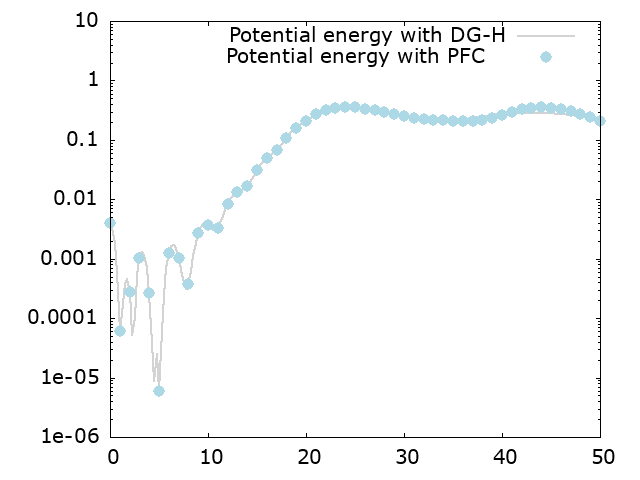}
        \\
(a)&(b)
        \end{tabular}
	\caption{{\bf Two stream instability:} $(a)$ deviation of mass, momentum and energy, $(b)$ time evolution of the electric field in $L^2$ norm  in logarithmic value with DG-H: $N_x\times N_H = 64 \times 128$ and the reference solution is from the PFC scheme with $N_x\times N_v=256\times 1024$.}
	\label{fig:1}
\end{figure}

\begin{figure}
	\centering
\begin{tabular}{cc}
	\includegraphics[width=3.2in,clip]{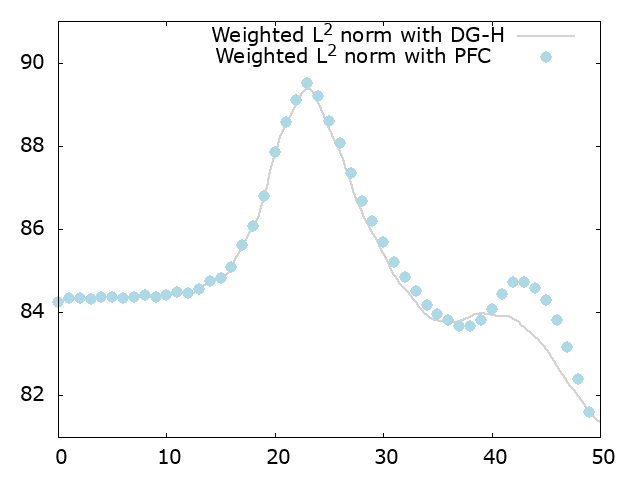}&
	\includegraphics[width=3.2in,clip]{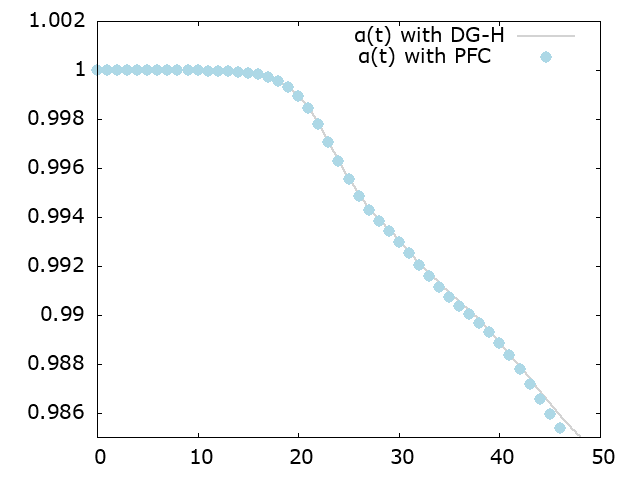}\\
        (a)&(b)
        \end{tabular}
	\caption{{\bf Two stream instability:} $(a)$ time evolution of the weighted $L^2$ norm of $f$, $(b)$ time evolution of the scaling function $\alpha$ for DG-H with $N_x\times N_H = 64 \times 128$ and the reference solution is from the PFC scheme with $N_x\times N_v=256\times 1024$.}
	\label{fig:2}
\end{figure}

\begin{figure}
  \centering
  \begin{tabular}{cc}
	\includegraphics[width=3.2in,clip]{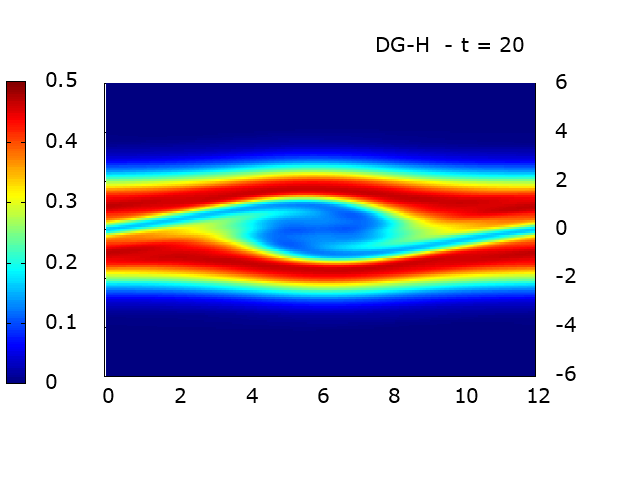} &
	\includegraphics[width=3.2in,clip]{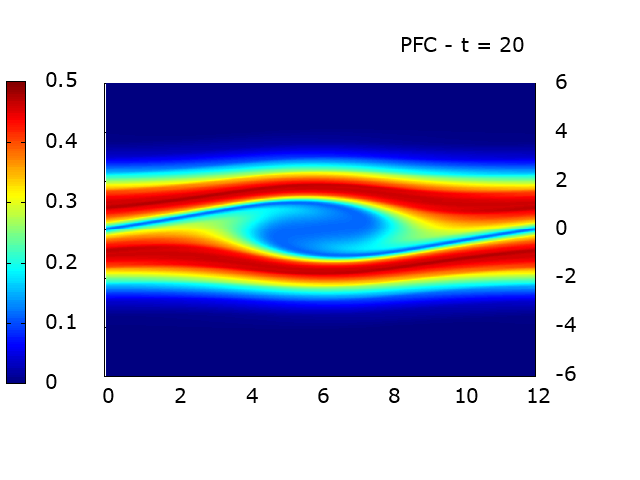}
        \\
        \includegraphics[width=3.2in,clip]{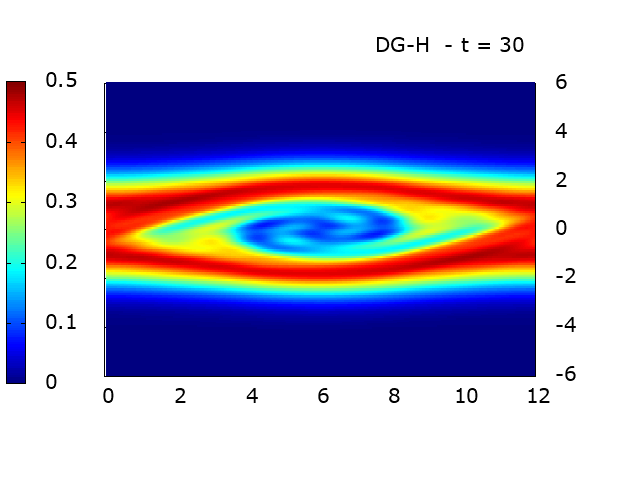} &
	\includegraphics[width=3.2in,clip]{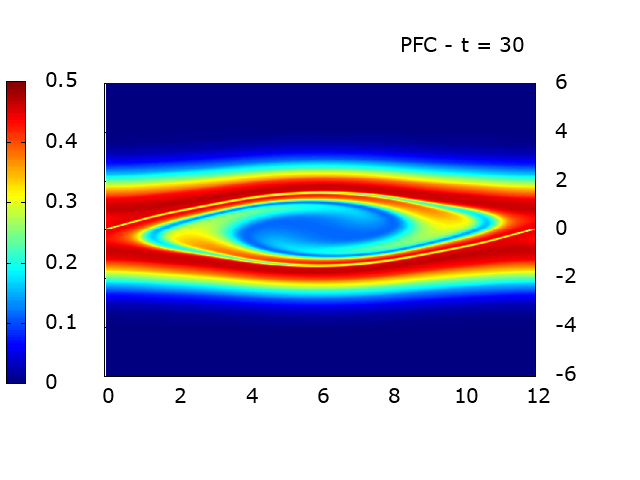}
         \\
        \includegraphics[width=3.2in,clip]{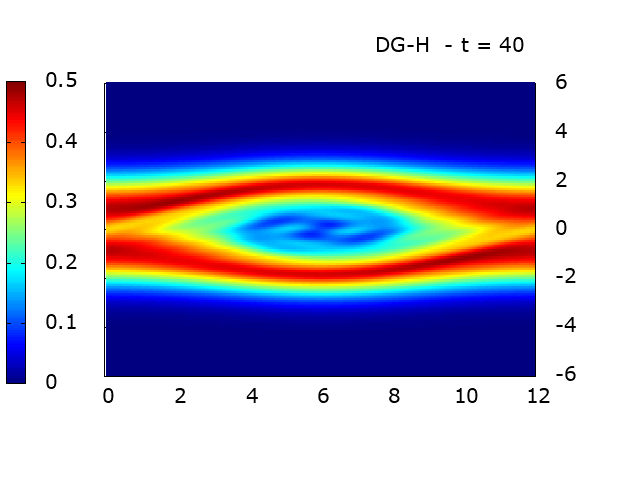} &
	\includegraphics[width=3.2in,clip]{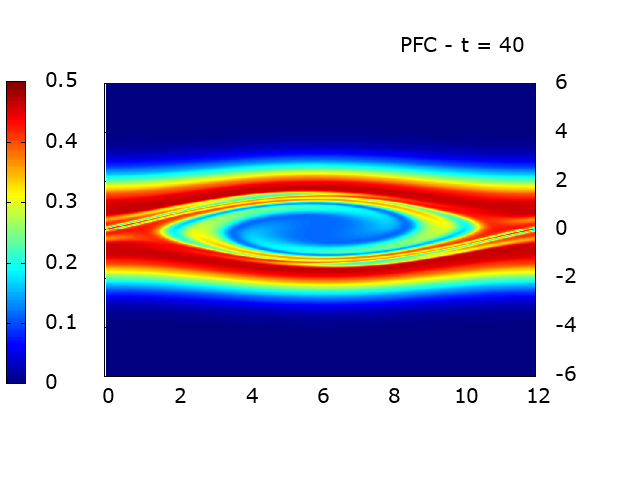}
        \\
        (a) & (b)
\end{tabular}
	\caption{{\bf Two stream instability:}  Surface plot of the distribution function $f$ at $t=20$, $30$ and $40$ with $(a)$ $N_x\times N_H = 64 \times 128$ for DG-H; $(b)$ $N_x\times N_v = 256\times 1024$ for PFC. }
	\label{fig:3}
\end{figure}

\subsection{Bump-on-tail instability}
\label{sec:4:2}
Then we consider the bump-on-tail instability problem with the initial distribution as
\begin{align}
\label{bot}
f(0,x,v)=f_{b}(v)(1+\kappa\cos(k\,n\,x))\,,
\end{align} 
where the bump-on-tail distribution is
\begin{align}
f_{b}(v)=\frac{n_p}{\sqrt{\pi}v_{p}}e^{-v^2/v^2_{p}}+\frac{n_b}{\sqrt{\pi}v_{b}}e^{-(v-v_{d})^2/v_{b}^2}\,.
\end{align}
We choose a strong perturbation with $\kappa=0.04$, $n=3$ and $k=1/10$ and the other parameters are set to be $n_p=0.9$, $n_b=0.1$, $v_{d}=4.5$, $v_{p}=\sqrt{2}$, $v_{b}=\sqrt{2}/2$. The computational domain is $[0,20\pi]\times[-8, 8]$.  These settings have been used in \cite{shoucri1974} and \cite[Section 4.3]{Filbet2020}. For this case, we take the initial scaling function to be   $\alpha_0 =5/7$.

Again, we take $N_x\times N_H=64\times 128$ for DG-H and compare the
solutions to PFC with $N_x\times N_v=256\times 1024$. We first show
the time evolution of the relative deviations of discrete mass,
momentum and total energy in Figure \ref{fig:4} $(a)$. Here, the
errors on mass and total energy are up to machine precision whereas
the momentum varies with respect to time up to $10^{-5}$. We remind
that our space discretization does not ensure conservation of
momentum. We also plot the time evolution of the electric field in
$L^2$ norm in Figure \ref{fig:4} $(b)$. We compare them to the results
obtained by using the PFC method with mesh size $256\times 1024$. We
can see these results have the same structure and they are similar to
those in \cite{shoucri1974}. We also present the time evolution of the
weighted $L^2$ norm and the scaling function $\alpha$ in Figure
\ref{fig:5}.  The $L^2$ norm first increases almost exponentially
fast, hence  it stabilizes for larger time and oscillates, whereas as expected, the scaling function slowly decreases.   

Finally we show the surface plots of the distribution function at $t=12.5$, $25$ and $50$ in Figure \ref{fig:6}. From the comparison of these two methods, we can find that at the beginning $t \leq 20$, the solutions are very close. But as time evolves,
the solutions are moving in different phases. However, the results from DG-H look relatively well.

\begin{figure}
  \centering
  \begin{tabular}{cc}
	\includegraphics[width=3.2in,clip]{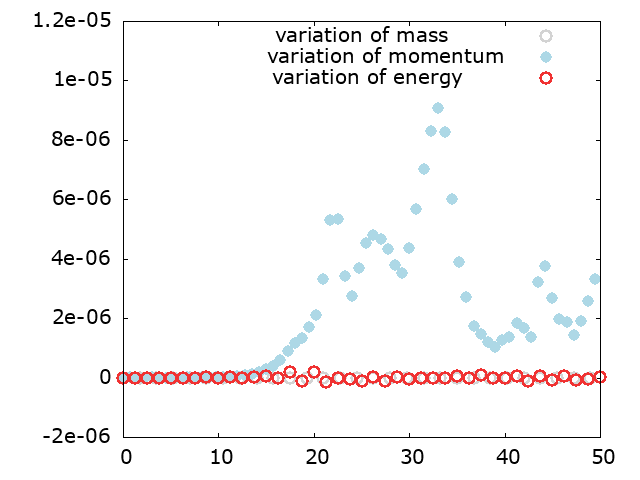}&
	\includegraphics[width=3.2in,clip]{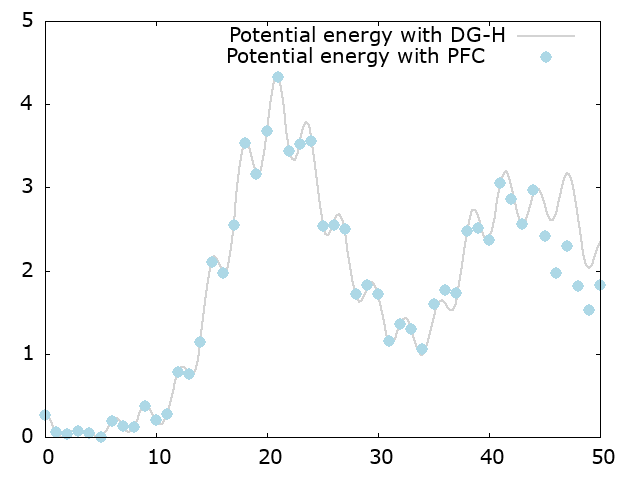}
        \\
        (a) & (b)
        \end{tabular}
	\caption{{\bf Bump-on-tail instability:} $(a)$ deviation of mass, momentum and energy and $(b)$ time evolution of the electric field in $L^2$ norm  in logarithmic value for DG-H: $N_x\times N_H = 64 \times 128$ whereas  the reference solution is obtained from the PFC scheme with $N_x\times N_v=256\times 1024$.}
	\label{fig:4}
\end{figure}

\begin{figure}
  \centering
  \begin{tabular}{cc}
	\includegraphics[width=3.2in,clip]{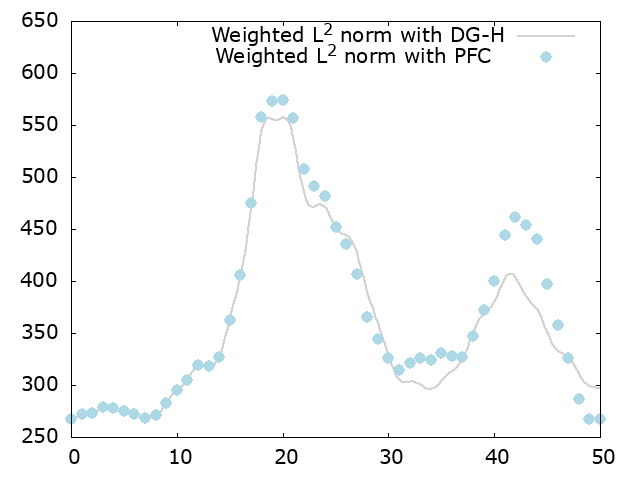} &
	\includegraphics[width=3.2in,clip]{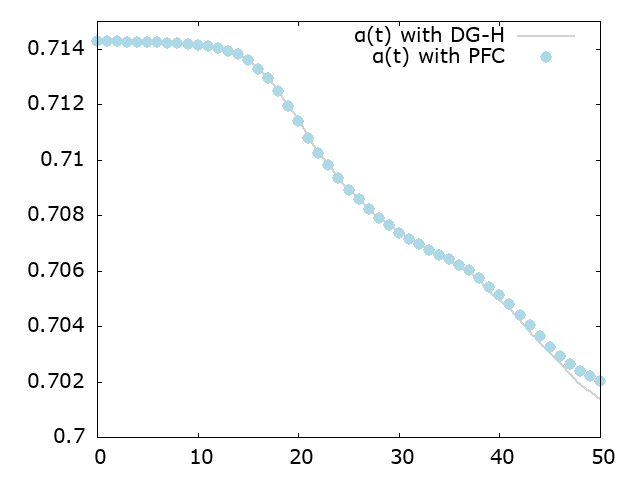}\\
        (a)&(b)
        \end{tabular}
	\caption{{\bf Bump-on-tail instability:} $(a)$ time evolution of the weighted $L^2$ norm of $f$, $(b)$ time evolution of the scaling function $\alpha$ for DG-H with $N_x\times N_H = 64 \times 128$, whereas the reference solution is from the PFC scheme with $N_x\times N_v=256\times 1024$.}
	\label{fig:5}
\end{figure}

\begin{figure}
  \centering
  \begin{tabular}{cc}
	\includegraphics[width=3.2in,clip]{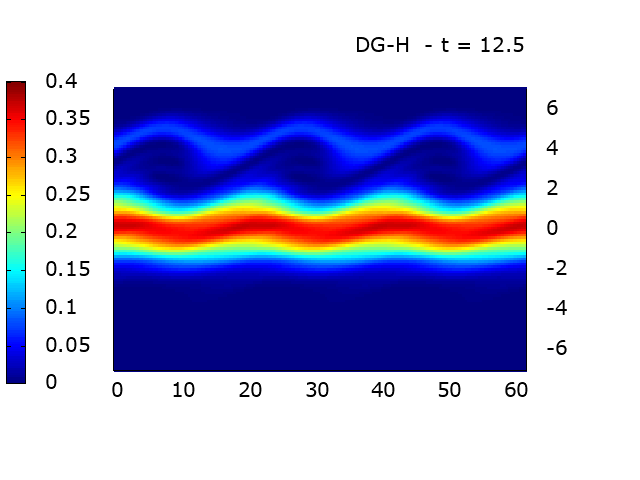} &
	\includegraphics[width=3.2in,clip]{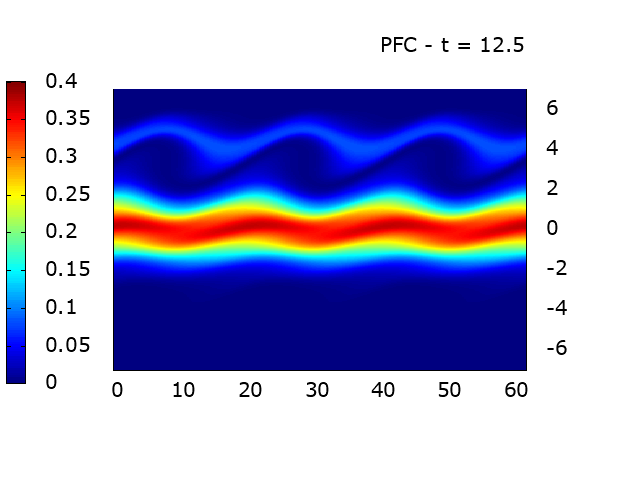}
        \\
        \includegraphics[width=3.2in,clip]{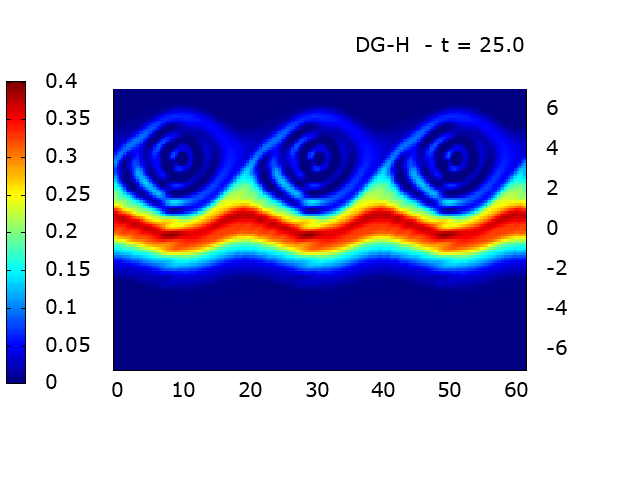} &
	\includegraphics[width=3.2in,clip]{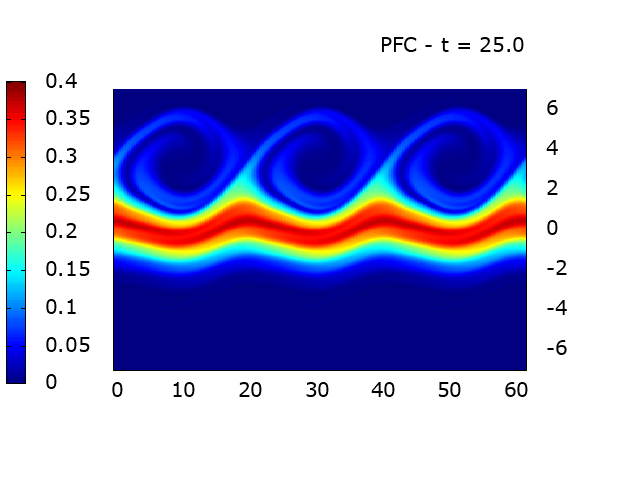}
         \\
        \includegraphics[width=3.2in,clip]{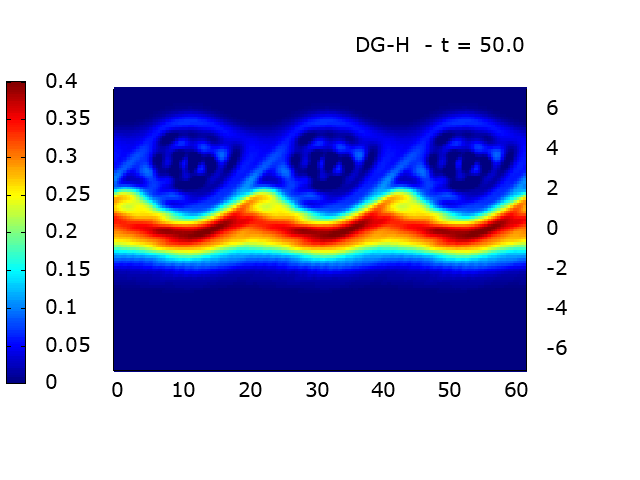} &
	\includegraphics[width=3.2in,clip]{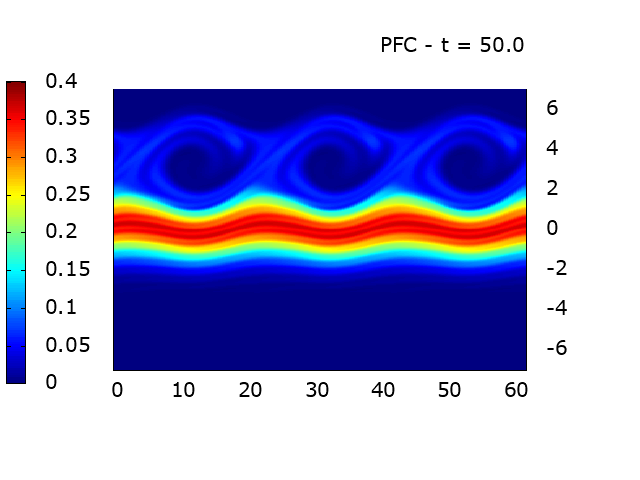}
        \\
        (a) & (b)
\end{tabular}
	\caption{{\bf Bump-on-tail instability:} Surface plot of the distribution function $f$ at $t=12.5$, $25$ and $50$ with $(a)$ $N_x\times N_H = 64 \times 128$ for DG-H and $(b)$ $N_x\times N_v = 256\times 1024$ for PFC. }
	\label{fig:6}
\end{figure}

%%%%%%%%%%%%%%%%%%%%%%%%%%%%%%%%%%%%%%%%%%
%
%%%%%%%%%%%%%%%%%%%%%%%%%%%%%%%%%%%%%%%%%%

\subsection{Ion acoustic wave}
\label{sec:4:3}
Finally, we  consider a multiscale problem occuring in kinetic plasma physics, that is, the time evolution of an ion acoustic wave where both electrons and ions dynamics are interacting (see \cite{fatone2019}). In this example, the initial distribution of electrons is given by
\begin{align}
\label{iaw}
f_e(0,x,v)\,=\,\frac{1}{\sqrt{2\pi}}\,(1+\kappa\cos(k\,x))\,e^{-|v-v_d|^2/2},
\end{align}
where $v_d=2$ is a drift velocity whereas the initial ditribution of ions is at equilibrium
\begin{align}
f_{i}(0,x,v)= \frac{1}{\sqrt{2\pi v_{th,i}^2}}\,e^{-v^2/2\,v_{th,i}^2}\,.
\end{align}
We choose a small perturbation with $\kappa=0.0001$ and $k=2\pi/10$ and the other parameters are set to be $v_{th,i}=1/50$. The computational domain is $[-5,5]$. We now consider two Vlasov equations for electrons and ions coupled through the Poisson equation
$$
-\partial_{xx} \Phi \,=\,  n_i - n_e.
$$
As in \cite{fatone2019}, we choose a reduced mass ratio ($m_e/m_i =
1/25$). The choice of these parameters has been made to trigger an
ion-acoustic wave instability. Concerning the numerical parameters, we
take $N_x\times N_H=128\times 128$ for DG-H for $f_e$ and $f_i$ with
$\alpha_e(0)=1$ and $\alpha_i(0)=1/v_{th,i}$, hence we compare the
solutions to PFC with $N_x\times N_v=256\times 1024$. We first show
the time evolution of the relative deviations of discrete mass,
momentum and total energy in Figure \ref{fig:7} $(a)$. The errors on
mass and total energy are of order $10^{-10}$ whereas the momentum
varies with respect to time up to $10^{-9}$. We also plot the time
evolution of the electric field in $L^2$ norm in Figure \ref{fig:7}
$(b)$. We compare them to the results by using the PFC method with
mesh size $256\times 1024$. We can see these results have the same
structure and they are similar to those in \cite{fatone2019}. We also
present the time evolution of the weighted $L^2$ norm of the
distributions $f_e$ and $f_i$ in Figure \ref{fig:8}.  On the one hand,
the behavior of the weighted $L^2$ norms of $f_e$ and $f_i$ is quite
different : the quantity $t\mapsto \|f_i(t)\|_{\omega_i}$ first
increases and then  it stabilizes for larger time, whereas $t\mapsto
\|f_e(t)\|_{\omega_e}$ decreases. On the other hand, as it is expected
$\alpha_e$ and $\alpha_i$ decrease slowly.

Finally we show a zoom of the surface plots of the distribution function $f_e$ when the instability develops at time $t=175$, $200$ and $250$ in Figure \ref{fig:9}.  Of course, we get a lower resolution with DG-H on the coarse mesh but when $t\leq 200$, both solutions have the same behavior.  As time evolves $t\geq 200$, the solutions are moving in different phases. However, the results from DG-H look relatively well.

\begin{figure}
  \centering
  \begin{tabular}{cc}
	\includegraphics[width=3.2in,clip]{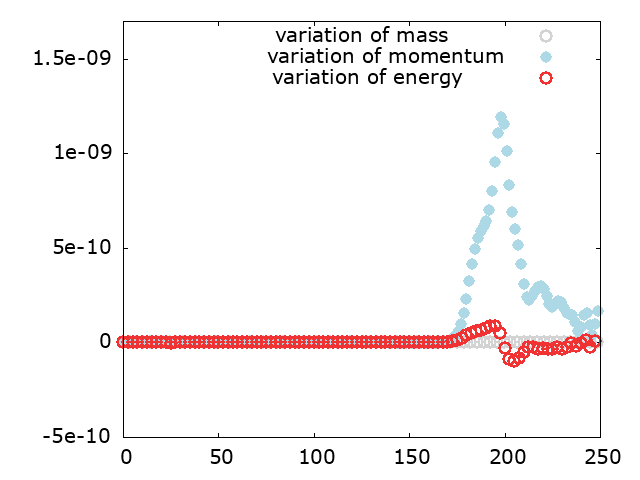}&
	\includegraphics[width=3.2in,clip]{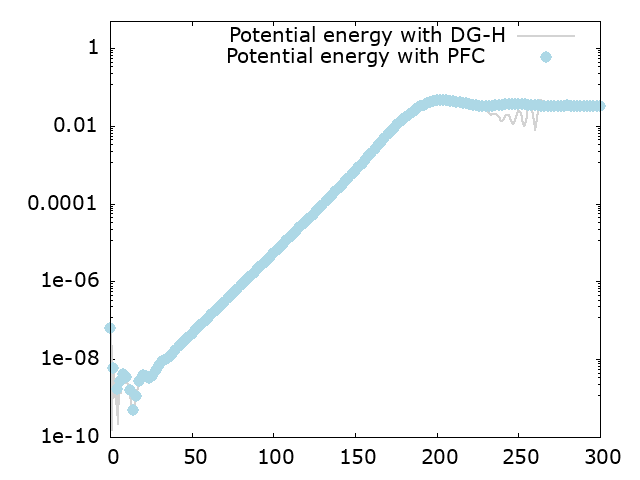}
        \\
        (a) & (b)
        \end{tabular}
	\caption{{\bf Ion acoustic wave:} $(a)$ deviation of total mass, momentum and energy and $(b)$ time evolution of the electric field in $L^2$ norm  in logarithmic value for DG-H: $N_x\times N_H = 64 \times 128$ whereas  the reference solution is obtained from the PFC scheme with $N_x\times N_v=256\times 1024$.}
	\label{fig:7}
\end{figure}

\begin{figure}
  \centering
  \begin{tabular}{cc}
	\includegraphics[width=3.2in,clip]{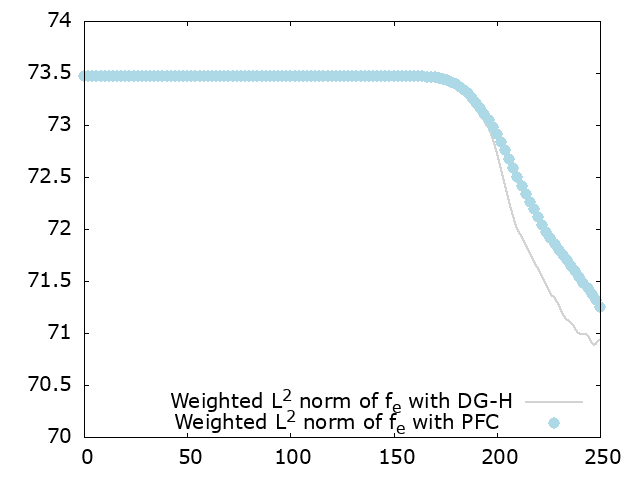} &
	\includegraphics[width=3.2in,clip]{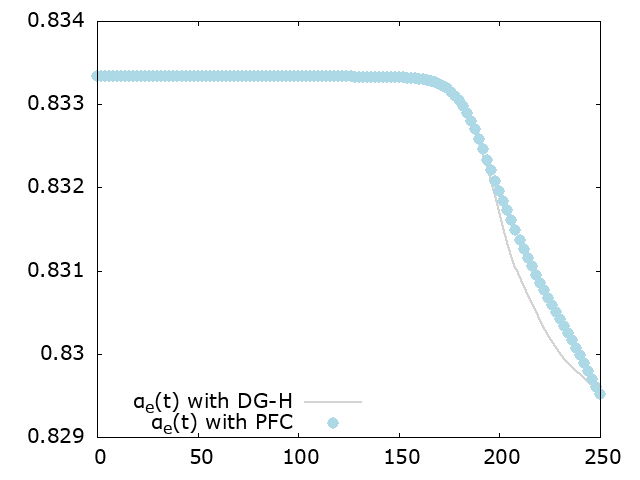}\\
        \includegraphics[width=3.2in,clip]{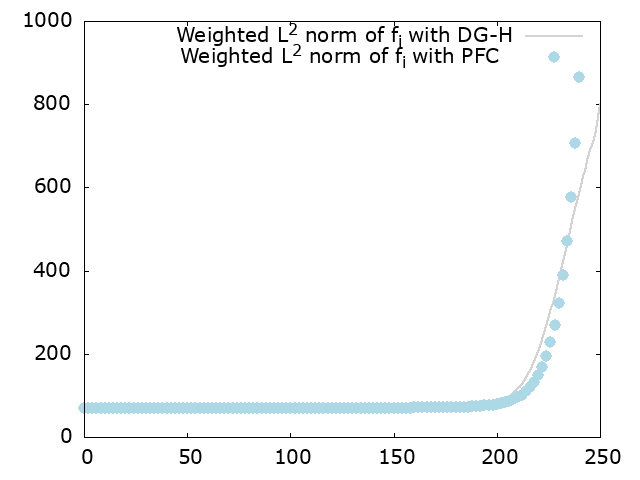} &
	\includegraphics[width=3.2in,clip]{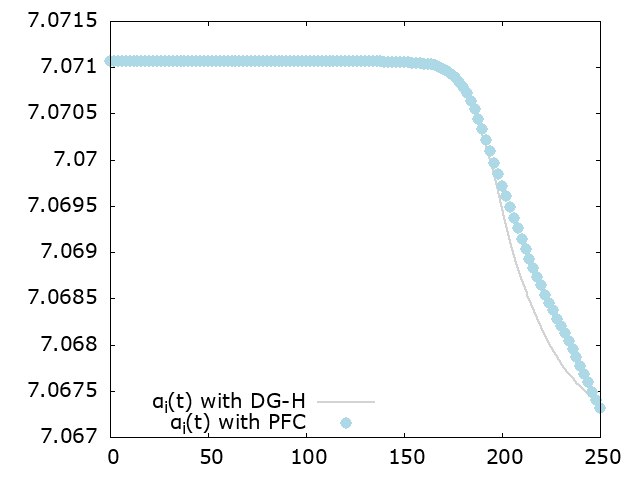}\\
        (a)&(b)
        \end{tabular}
	\caption{{\bf Ion acoustic wave:} $(a)$ time evolution of the weighted $L^2$ norm of  $f_e$ (top) and  $f_i$ (bottom) and  $(b)$ time evolution of  $\alpha_e$ (top) and  $\alpha_i$ (bottom) for DG-H with $N_x\times N_H = 64 \times 128$, whereas the reference solution is from the PFC scheme with $N_x\times N_v=256\times 1024$.}
	\label{fig:8}
\end{figure}

\begin{figure}
  \centering
  \begin{tabular}{cc}
	\includegraphics[width=3.2in,clip]{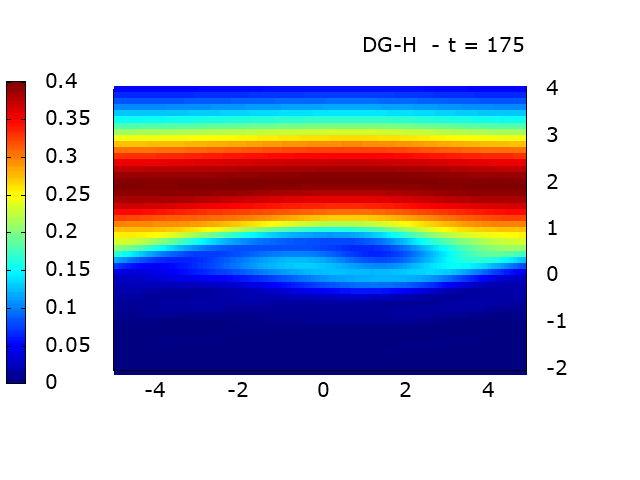} &
	\includegraphics[width=3.2in,clip]{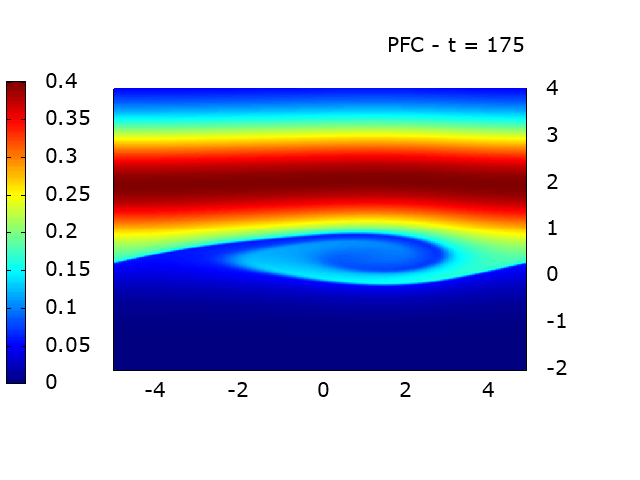}
        \\
        \includegraphics[width=3.2in,clip]{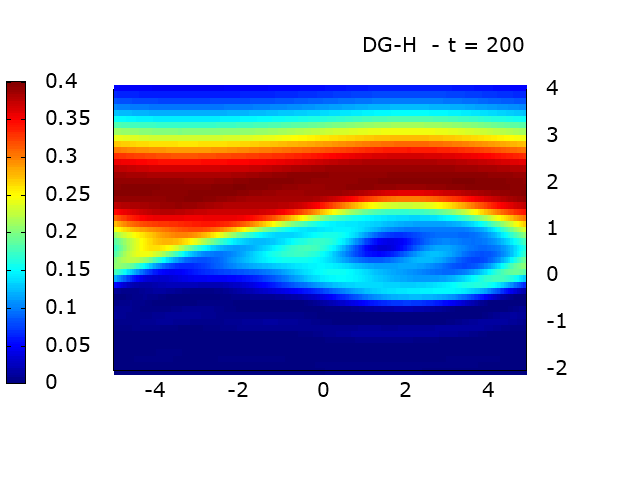} &
	\includegraphics[width=3.2in,clip]{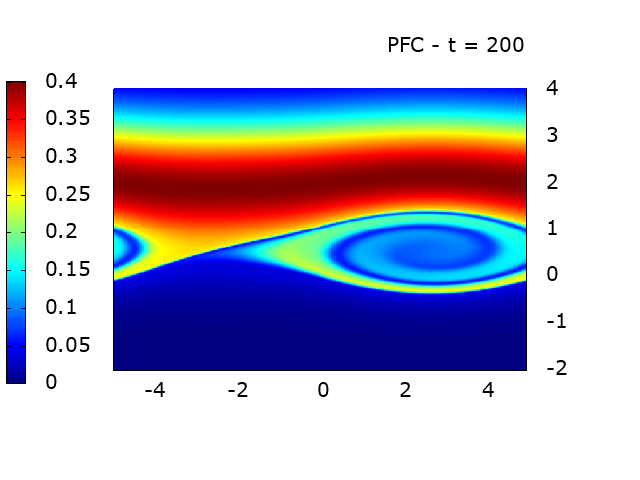}
         \\
        \includegraphics[width=3.2in,clip]{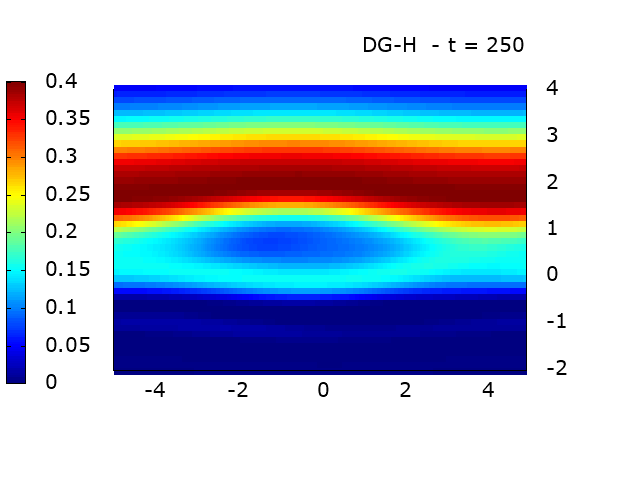} &
	\includegraphics[width=3.2in,clip]{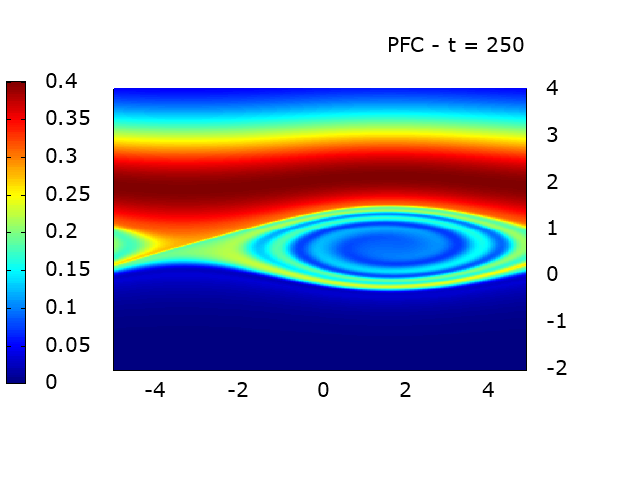}
        \\
        (a) & (b)
\end{tabular}
	\caption{{\bf Ion acoustic wave:} Surface plot of the distribution function $f_e$ at $t=12.5$, $25$ and $50$ with $(a)$ $N_x\times N_H = 64\times 128$ for DG-H and $(b)$ $N_x\times N_v = 256\times 1024$ for PFC. }
	\label{fig:9}
\end{figure}

%%%%%%%%%%%%%%%%%%%%%%%%%%%%%%%%%%%%%%%%%%
%
%%%%%%%%%%%%%%%%%%%%%%%%%%%%%%%%%%%%%%%%%%

\section{Conclusion and perspectives}

We propose in this paper a spectral Hermite discretization of the Vlasov-Poisson system with a time-dependent scaling factor allowing to prove some stability properties of the numerical solution. It appears that the control of this scaling factor, and more precisely a positive lower bound, is crucial to ensure completely the stability of the method. This property, as well as conservation features, is discussed for several spatial discretizations. Briefly speaking, the scaling factor does not affect the precision of the method compared to the recent work in \cite{Filbet2020}  and guarantees the stability of the numerical approximation.

The present work is the first stone to investigate the convergence
analysis of the proposed symmetric Hermite spectral discretization as
it has already been done for asymmetric Hermite spectral
discretization in \cite{Manzini2017}. Furthermore, the time
discretization proposed in this paper follows the ideas of
\cite{Filbet2020}, but unfortunately it does not constraint energy conservation and $L^2$-stability. One approach would be to construct a Crank-Nicolson type scheme for $f$ and $\alpha$ in such a way that stability properties may be proved rigorously for a full discretized method.

%%%%%%%%%%%%%%%%%%%%%%%%%%%%%%%%%%%
%
%%%%%%%%%%%%%%%%%%%%%%%%%%%%%%%%%%%
\section*{Acknowledgement}

Marianne Bessemoulin-Chatard is partially funded by the Centre Henri Lebesgue (ANR-11-LABX-0020-01) and ANR Project MoHyCon (ANR-17-CE40-0027-01). Francis Filbet  is partially funded by the ANR Project Muffin (ANR-19-CE46-0004) and by the EUROfusion Consortium and has
received funding from the Euratom research and training programme
2019-2020 under grant agreement No 633053. The views and opinions expressed herein do not necessarily reflect those of the European Commission..

\bibliographystyle{abbrv}
\bibliography{refer}
\end{document}